\newtheorem{proposition}{Proposition}[section]
\newtheorem{theorem}{Theorem}[section]
\newtheorem{example}{Example}[section]
\newtheorem{remark}{Remark}[section]
\newtheorem{corollary}{Corollary}[section]
\newtheorem{lemma}{Lemma}[section]
\newenvironment{proof}{\textbf{Proof.}}{\qquad $\Box$ \bigskip }
\newenvironment{mainproof}{\textbf{Proof of Theorem \ref{thm:mr+2}.}}{\qquad $\Box$ \bigskip }
\newcommand{\npmatrix}[1]{\left( \begin{matrix} #1 \end{matrix} \right)}
\newcommand{\R}{\mathbb{R}}
\newcommand{\mr}{\mathrm{mr}}
\newcommand{\rk}{\mathrm{rk}}
\newcommand{\Mm}{\mathrm{Mm}}
\begin{document}

\begin{frontmatter}



\title{The maximum of the minimal multiplicity of eigenvalues of symmetric matrices whose pattern is constrained by a graph}


\author[po]{Polona Oblak\corref{cor1}}
\ead{polona.oblak@fri.uni-lj.si}
\address[po]{Faculty of Computer and Information Science, University of Ljubljana, Ve\v cna pot 113, 
SI-1000 Ljubljana, Slovenia}
\cortext[cor1]{Corresponding author}

\author[he]{Helena \v Smigoc}
\ead{helena.smigoc@ucd.ie}
\address[he]{School of Mathematical Sciences, University College Dublin, Belfield, Dublin 4, Ireland}

\fntext[fn1]{This work was supported by Science Foundation Ireland under Grant 11/RFP.1/MTH/3157.}

\begin{abstract}
In this paper we introduce a parameter $\Mm(G)$, defined as the maximum over the minimal multiplicities of eigenvalues among all symmetric matrices corresponding to a graph $G$. We compute $\Mm(G)$  for several families of graphs.
\end{abstract}

\begin{keyword}
Symmetric matrix \sep Multiplicity of an eigenvalue \sep Minimal rank \sep Graph


\MSC 05C50 \sep 15A18 \sep 15B57
\end{keyword}

\end{frontmatter}



\section{Introduction}

Given a simple undirected graph $G=(V(G),E(G))$ with vertex set $V(G)=\{1,2,\ldots,n\}$, let $S(G)$ be the set of all real symmetric $n \times n$ matrices $A=(a_{ij})$ such that, for $i \neq j$, $a_{ij} \neq 0$ if and only if $(i,j) \in E(G)$. There is no restriction on the diagonal entries of $A$.   

The graph $G$ pozes some conditions on the eigenvalues of $A \in S(G)$, and several questions in the literature are trying to better understand those conditions. The most general is the question of characterizing all lists of real numbers $$\{\lambda_1,\lambda_2,\ldots,\lambda_n\}$$ that can be the spectrum of a matrix $A \in S(G),$ and is known as the \emph{Inverse eigenvalue problem for $G$}. This question and the related question of characterizing all possible multiplicities of eigenvalues of matrices in $S(G)$ have been studied primarily for trees \cite{MR2996934,MR1902112,MR3005270,MR3034496}.
A subproblem to the inverse eigenvalue problem for graphs that has attracted a lot of attention over the years is that of minimizing the rank of all $A \in S(G)$. Finding the minimal rank of $G$, defined as
 $$\mr (G)=\min \{\rk(A); A \in S(G)\},$$ 
is equivalent to finding the maximal multiplicity of an eigenvalue of $A \in S(G)$, denoted by $M(G).$
\emph{The minimum rank problem} has been resolved for several families of graphs. 
We refer the reader to an excellent survey paper on the problem \cite{MR2350678} where additional references can be found.  A more recent survey paper \cite{2011arXiv1102.5142F} not only gives an up-to-date on the minimum rank problem, but it also talks about several  of its variants that can be found in the literature. 
For example,  the possible inertia of matrices $A \in S(G)$  has been studied in \cite{MR2547901,MR2781688,MR2596445} and the minimum number of distinct eigenvalues in \cite{MR3118943}. For a matrix $A$, we let $q(A)$ denote the number of distinct eigenvalues of $A$. For a graph $G$, we define
$$q(G) = \min\{q(A); \; A \in S(G)\}.$$

In \cite{MR3208410} we considered the problem of
determining for which graphs $G$ there exists a matrix in $S(G)$ whose characteristic polynomial is a square, i.e. the multiplicities of
all its eigenvalues are even. This question is closely related to the question of determining for which graphs $G$ there exists a matrix in $S(G)$ with all the multiplicities of eigenvalues at least $2$.   
In this paper we bring this topic further by defining and studying a new parameter for a graph $G$ denoted by $\Mm(G)$. For a matrix $A\in M_n(\R)$ we denote $\Mm(A)$ to be the minimal eigenvalue multiplicity of $A$. Then $\Mm(G)$ is defined to be:
$$\Mm(G)=\max\{\Mm(A); A\in S(G)\}.$$ 
In order words, we define $\Mm(G)$ to be the maximum over the minimal 
multiplicities of eigenvalues among all $A\in S(G)$. Clearly, $\Mm(G) \leq \lfloor \frac{n}{2} \rfloor$ for all nonempty 
graphs on $n$ vertices. If $\Mm(G) =1$, then 
all matrices in $S(G)$ must have a simple eigenvalue.  Moreover, it is clear that  we can determine an upper bound for $\Mm(G)$ in terms of $M(G)$ and $q(G)$ as follows:
\begin{equation}\label{eqn:basic}
\Mm(G) \leq M(G), \; \Mm(G) \leq \left\lfloor \frac{|G|}{q(G)}\right\rfloor.
\end{equation}
As we will see later in the paper both bounds can be achieved for some but not all graphs $G$. It is clear from (\ref{eqn:basic}) that graphs that have large $\Mm$, will have to have relatively small $q$, and that the two parameters are related. 

To determine $\Mm(G)$ for a given graph $G$ we need to control the multiplicities of all the eigenvalues of $A \in S(G)$, and in doing so we further our understanding of the Inverse eigenvalue problem for $G$. 

Our paper is organised as follows. In Section \ref{Sec 2} we introduce the notation and some preliminary result that give introductory insight into $\Mm(G)$ and mostly follow from known results. In Section \ref{Sec Const} we recall a few constructions from the literature and develop some generalizations that we need later.  In Section \ref{Sec 3} we look for graphs with large $\Mm$. We show that the equality $\Mm(G) = \lfloor \frac{n}{2} \rfloor$ is achieved for practically
all graphs with $\mr_+(G)=2.$ We also determine $\Mm(G)$ for complete bipartite graphs. In Section \ref{Sec 4} we look at the graphs with small $\Mm(G)$. We choose from the graphs $G$ on $n$ vertices with $\mr(G)=n-2$, and we show that among those graphs there exist graphs with $\Mm(G)=1$ and graphs with $\Mm(G)=2$.  

\bigskip

 \section{Notation and preliminary results} \label{Sec 2}
 
 By $M_n(\R)$ we denote the set of all $n \times n$ matrices with real entries. By $I_n$ we denote the identity matrix in $M_n(\R)$ and by $0_n$  we denote the zero matrix in $M_n(\R)$.

Our notation concerning graphs is as follows. For a graph $G=(V(G),E(G))$ we denote its order by $|G|=|V(G)|$.  The complement  $G^c$ of a graph $G$ is the graph on vertices $V(G)$ such that two vertices are adjacent in $G^c$ if and only if they are not adjacent in $G$.  The join $G \vee H$ of $G$ and $H$ is the graph union $G \cup H$ together with all the possible edges joining the vertices in $G$ to the vertices in $H$.
 The complete graph on $n$ vertices  will be denoted by $K_n$ and complete bipartite graph on disjoint sets of cardinality $m$ and $n$ by $K_{m,n}$. 
 
 The Cartesian product $G \square H$ of graphs $G, H$ is a graph with the vertex set $V(G) \times V(H)$ and $((u_1,u_2),(v_1,v_2)) \in E(G \square H)$ if and only if either $u_1=v_1$ and  $(u_2,v_2) \in E(H)$ or $u_2=v_2$ and  $(u_1,v_1) \in E(G)$. 
 The tensor product of two graphs is  defined as
$G \times H=(V(G \times H), E(G \times H))$. where $V(G \times H)=V(G) \times V(H)$
and $((u,u'),(v,v')) \in E(G \times H)$ if and only if $ (u,v) \in E(G)$ and $(u',v') \in E(H)$.
The strong product $G \boxtimes H$ of graphs $G, H$ is a graph with the vertex set $V(G) \times V(H)$ and $((u_1,u_2),(v_1,v_2)) \in E(G \boxtimes H)$ if and only if either $u_1=v_1$ and  $(u_2,v_2) \in E(H)$ or $u_2=v_2$ and  $(u_1,v_1) \in E(G)$ or $(u_1,v_1) \in E(G)$ and $(u_2,v_2) \in E(H)$.

We start with some basic observations that follow from the known results.

\begin{proposition}\label{n/2}
Let $G$ be a graph with  $|G|=2n$, $\mr(G)=n$ and $q(G)=2$. Then $\Mm(G)=n$.
\end{proposition}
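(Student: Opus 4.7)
The plan is to combine the two bounds in (\ref{eqn:basic}) with the special numerical relationship $|G|=2\mr(G)$ and $q(G)=2$ to pin $\Mm(G)$ down exactly.

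First I would record the upper bound. Both halves of (\ref{eqn:basic}) give the same value: since $M(G)=|G|-\mr(G)=2n-n=n$, we get $\Mm(G)\le M(G)=n$, and equally $\Mm(G)\le \lfloor |G|/q(G)\rfloor=\lfloor 2n/2\rfloor=n$. So the whole content of the proposition is the matching lower bound $\Mm(G)\ge n$, i.e.\ exhibiting some $A\in S(G)$ with $\Mm(A)=n$.

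For the lower bound I would use the hypothesis $q(G)=2$: by definition there exists $A_0\in S(G)$ with exactly two distinct eigenvalues, say $\lambda_1$ and $\lambda_2$ with multiplicities $k_1$ and $k_2$. These multiplicities satisfy $k_1+k_2=|G|=2n$. At the same time every eigenvalue multiplicity of any matrix in $S(G)$ is at most $M(G)=n$, so $k_1\le n$ and $k_2\le n$. The only way $k_1+k_2=2n$ with both $k_i\le n$ is $k_1=k_2=n$, which forces $\Mm(A_0)=n$ and hence $\Mm(G)\ge n$.

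There is no real obstacle here; the whole argument is a short counting observation, and the work of the hypotheses is doing everything. The only thing worth highlighting is the use of the identity $M(G)=|G|-\mr(G)$, which is already stated in the introduction, to convert the assumption on $\mr(G)$ into the ceiling on individual multiplicities that forces the balanced split $n+n=2n$.
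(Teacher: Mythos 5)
Your argument is correct and is essentially the paper's own proof: take $A_0\in S(G)$ with two distinct eigenvalues, note $M(G)=|G|-\mr(G)=n$ caps each multiplicity, and conclude the split must be $n+n$. The only difference is that you spell out the upper bound $\Mm(G)\le n$ from (\ref{eqn:basic}) explicitly, which the paper leaves implicit.
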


\begin{proof}
Since $q(G)=2$, there exists a symmetric matrix $A \in S(G)$ with exactly two distinct eigenvalues. Let $k$ and $2n-k$ be the corresponding multiplicities. Since maximal multiplicity of an eigenvalue among matrices in 
$S(G)$ is $n$, we have $k \leq n$ and $2n-k\leq n$, proving that $k=n$.
\end{proof}

Note that the same argument as in the proof above, tells us that there is no graph $G$, $|G|=2n+1$, with  $\mr(G)=n+1$ and $q(G)=2$.

 In the following proposition we gather observations on $\Mm(G)$ that follow from results in  \cite{MR3118943} on $q(G).$
 
\begin{proposition}
\begin{enumerate}
\item For a connected graph $G$ we have $\Mm(G \vee G)=|G|$.
\item If $|G|=2n$, $|H|=2m$ with $\Mm(G)=n$ and $\Mm(H)=m$, then $\Mm(G \cup H)=n+m$.
\item Let $G'$ be the corona graph of $G$, i.e. the graph with $2 |G|$ vertices obtained from $G$ by joining each vertex of $G$ with a pendant vertex. Then
  $\Mm(G') \geq \Mm(G).$
\item Let $G$ be a graph on $n$ vertices with $q(G)=2$. Then $\Mm(G \square K_2)=n$.  
\end{enumerate}
\end{proposition}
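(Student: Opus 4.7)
All four parts follow the same template: the upper bound is immediate (either from $\Mm(H)\leq\lfloor|H|/2\rfloor$ for nonempty $H$, or from the inequality $\Mm(H)\leq\lfloor|H|/q(H)\rfloor$ recorded in (\ref{eqn:basic})), so the content in each case is to exhibit a matrix in $S(H)$ achieving the claimed minimum multiplicity. My plan is to write down an explicit $2\times 2$ block ansatz for each construction and read off its spectrum; the four ansatzes all echo the $q=2$ constructions developed in \cite{MR3118943}.

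For part (1), I would combine the result from \cite{MR3118943} that $q(G\vee G)=2$ for connected $G$ with Proposition~\ref{n/2}, once I verify that the construction in \cite{MR3118943} yields a matrix whose two eigenvalues have balanced multiplicities $|G|$ each. For part (2), pick $A\in S(G)$ and $B\in S(H)$ attaining $\Mm(G)=n$ and $\Mm(H)=m$; each has exactly two eigenvalues. After the affine substitution $B\mapsto\alpha B+\beta I$ (which preserves $S(H)$) aligning the spectrum of $B$ with that of $A$, the direct sum $A\oplus B\in S(G\cup H)$ has only two eigenvalues, each of multiplicity $n+m$, matching the upper bound. For part (3), choose $A\in S(G)$ realising $\Mm(G)$ and consider
$$\tilde A=\begin{pmatrix}A & I_n\\ I_n & dI_n\end{pmatrix}\in S(G'),$$
where the off-diagonal identity block encodes the corona's pendant matching and $dI_n$ the edgeless pendant set. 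A block eigenvalue computation shows that every eigenvalue $\lambda$ of $A$ of multiplicity $k_\lambda$ contributes two eigenvalues of $\tilde A$, namely the roots $\mu_\lambda^\pm$ of $(\mu-\lambda)(\mu-d)=1$, each of multiplicity $k_\lambda$. The discriminant $(\lambda-d)^2+4$ is positive and distinct $\lambda$ give disjoint root pairs, so $\Mm(\tilde A)=\Mm(A)$. For part (4), use $q(G)=2$ to pick $A\in S(G)$ with exactly two eigenvalues and rescale affinely so that $A^2=I_n$. The block matrix
$$B=\begin{pmatrix}A & I_n\\ I_n & -A\end{pmatrix}\in S(G\square K_2)$$
(the off-diagonal identity matching the twin-vertex pairing that is the only source of cross-copy edges in $G\square K_2$) satisfies $B^2=2I_{2n}$ and $\tr B=0$, so its spectrum is $\pm\sqrt 2$ with multiplicities $n$ each.

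The main obstacle is part (1): it is the only part that does not reduce to a short direct computation on a prescribed block ansatz, but depends on inspecting the construction of \cite{MR3118943} closely enough to extract balanced eigenvalue multiplicities, rather than simply $q=2$. The remaining three parts are self-contained verifications once the right block matrix is written down.
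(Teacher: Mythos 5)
Your proposal follows the paper's proof essentially step for step: part (2) is the same affine-shift-then-direct-sum argument, part (4) is verbatim the paper's construction $B=\npmatrix{A & I_n \\ I_n & -A}$ with $B^2=2I_{2n}$ and $\tr B=0$, and part (3) is the paper's matrix (which it imports as \cite[Lemma 6.10]{MR3118943} with $d=0$) except that you carry out the $2\times 2$ block eigenvalue computation yourself and allow a general diagonal block $dI_n$; your check that $(\mu-\lambda)(\mu-d)=1$ has two real roots and that distinct $\lambda$ give disjoint root pairs is correct and makes the part self-contained. The one place you stop short is part (1), which you flag as the main obstacle; it is in fact the easiest to close. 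The construction behind $q(G\vee G)=2$ in \cite[Theorem 5.2]{MR3118943} produces
$$Q=\npmatrix{\sqrt{P} & \sqrt{I-P} \\ \sqrt{I-P} & -\sqrt{P}}\in S(G\vee G)$$
with $Q^2=I$, so its only eigenvalues are $\pm 1$, and $\tr Q=\tr\sqrt{P}-\tr\sqrt{P}=0$ forces the two multiplicities to be equal, i.e.\ both $|G|$. Note that your suggested alternative of invoking Proposition~\ref{n/2} does not quite work as stated: that proposition also requires $\mr(G\vee G)=|G|$, which you have not established and do not need --- the trace argument already balances the multiplicities without any minimum-rank input.
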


\begin{proof}
\begin{enumerate}
\item  In  \cite[Theorem 5.2]{MR3118943} it is proved that $q(G \vee G)=2$ for a connected graph $G$. This is proved by showing that there exists a matrix  $Q \in S(G \vee G)$ of the form
   $$Q=\left[\begin{array}{cc}\sqrt{P} & \sqrt{I-P} \\ \sqrt{I-P} & -\sqrt{P}\end{array}\right].$$
Since $Q^2=I$ this implies that $q(Q)=2$. More precisely, $1$ and $-1$ are the only eigenvalues of $Q$. Since the trace of $Q$ is 0, it follows that both $1$ and $-1$ must have multiplicity $|G|$ and thus $\Mm(G \vee G)=|G|$.
\item Choose $A\in S(G)$ with eigenvalues $\lambda_{1}, \lambda_{2}$, each having multiplicity $n$, and $B \in S(H)$ with eigenvalues $\mu_{1}$, $\mu_{2}$, each having multiplicity $m$.
Then the matrix $$A\oplus \frac{\lambda_{1}-\lambda_{2}}{\mu_{1}-\mu_{2}} \left(B+\frac{\mu_{1}\lambda_{2}-\mu_{2}\lambda_{1}}{\lambda_{1}-\lambda_{2}} I\right)$$
has eigenvalues $\lambda_{1}$ and $\lambda_{2}$, each having multiplicity $n+m$.
\item 
It was proved in  \cite[Lemma 6.10]{MR3118943} that if $\lambda$ is an eigenvalue of $A$ with multiplicity $m$, then the matrix
$$B=\left[\begin{array}{cc}A & I\\ I & 0 \end{array}\right] \in S(G')$$
has two distinct eigenvalues $\mu_1(\lambda)$ and $\mu_2(\lambda)$ both with multiplicity $m$.
\item By \cite[Lemma 2.3]{MR3118943} there exists $A \in S(G)$ such that $q(A)=q(G)=2$ and the eigenvalues of $A$ are $1$ and $-1$. Therefore the minimal polynomial of $A$ is equal to $x^2-1.$ As in the proof of   \cite[Theorem 6.7]{MR3118943}, we construct a matrix
 $B=\npmatrix{A & I_n \\ I_n & -A} \in S(G \square K_2).$
 Since $A^2=I_n$, it follows that $B^2=2I_{2n}$. 
 This proves that the minimal polynomial of matrix $B$ is $x^2-2$ and thus the only possible eigenvalues of $B$ are $\sqrt{2}$ and $-\sqrt{2}$. Since the trace of $B$ is equal to $0$, the multiplicities of the two eigenvalues have to be equal and so  $\Mm(G \square K_2)=n$.
\end{enumerate}
\end{proof}

\begin{proposition}
Let $Q_s$ be the hypercube, $|Q_s|=2^s$. Then $\Mm(Q_s)=2^{s-1}$.
\end{proposition}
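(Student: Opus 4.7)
The plan is to use induction on $s$, exploiting the recursive definition $Q_s = Q_{s-1} \square K_2$ (with $Q_1 = K_2$) together with part (4) of the preceding proposition. The upper bound $\Mm(Q_s) \leq 2^{s-1}$ is immediate from the general inequality $\Mm(G) \leq \lfloor |G|/2 \rfloor$, so the whole content is in producing, for each $s$, a matrix in $S(Q_s)$ whose eigenvalues each have multiplicity at least $2^{s-1}$.

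First I would strengthen the induction hypothesis so that it also carries the condition $q(Q_s) = 2$; this is what allows part (4) of the previous proposition to be applied at the next step. For the base case $s = 1$, the matrix $\bigl(\begin{smallmatrix} 0 & 1 \\ 1 & 0 \end{smallmatrix}\bigr) \in S(K_2)$ has eigenvalues $\pm 1$, so $q(Q_1) = 2$ and $\Mm(Q_1) = 1 = 2^0$.

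For the inductive step, assuming $q(Q_{s-1}) = 2$ and $\Mm(Q_{s-1}) = 2^{s-2}$, I would apply part (4) of the previous proposition to $G = Q_{s-1}$: it produces a matrix $B \in S(Q_{s-1} \square K_2) = S(Q_s)$ with minimal polynomial $x^2 - 2$, so $B$ has exactly two eigenvalues $\pm\sqrt{2}$. Hence $q(Q_s) = 2$, and since $\tr(B) = 0$ forces equal multiplicities, each of them appears $2^{s-1}$ times. This gives $\Mm(Q_s) \geq 2^{s-1}$, which combined with the trivial upper bound yields equality and propagates both induction hypotheses.

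There is no real obstacle beyond choosing to track the auxiliary property $q(Q_s) = 2$ along with $\Mm(Q_s)$; once that is in place, the argument is a one-line induction driven by part (4) of the preceding proposition.
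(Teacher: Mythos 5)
Your proof is correct, but it takes a different route from the paper's. The paper does not induct: it simply cites two external facts, $q(Q_s)=2$ (from Corollary~6.9 of the Ahmadi--Alinaghipour--Cavers--Fallat--Meagher--Nasserasr paper) and $\mr(Q_s)=2^{s-1}$ (from the AIM minimum-rank group's zero-forcing paper), and then invokes Proposition~\ref{n/2}, which pins the two multiplicities at $2^{s-1}$ each because neither can exceed $M(Q_s)=2^{s-1}$. You instead run an induction on $s$ through the recursion $Q_s=Q_{s-1}\,\square\,K_2$, carrying the auxiliary invariant $q(Q_s)=2$ and using part (4) of the preceding proposition at each step; the equal multiplicities come from the trace-zero argument inside that construction rather than from the minimum-rank bound. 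Your version is self-contained relative to this paper (it never needs $\mr(Q_s)$ and it re-derives $q(Q_s)=2$ rather than citing it), at the cost of being slightly longer; the paper's version is a two-line consequence of known results but leans on two outside citations. Both are valid, and your observation that the construction in part (4) outputs a matrix with exactly two distinct eigenvalues --- so that the hypothesis $q=2$ propagates --- is precisely the point that makes the induction close.
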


\begin{proof}
By \cite[Corollary 6.9]{MR3118943} we have $q(Q_s)=2$ and by \cite[Theorem 3.1]{MR2388646} we have $\mr(Q_s)=2^{s-1}$. 
Using Theorem  \ref{n/2} it follows that
$\Mm(Q_s)=2^{s-1}
$.
\end{proof}

\begin{theorem}\label{thm:tree}
Let $G$ be a graph on $n$ vertices with an induced tree on $n-k$ vertices. Then $\Mm(G)\leq k+1$.
\end{theorem}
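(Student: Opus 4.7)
The plan is to argue by contradiction using Cauchy interlacing together with a Perron--Frobenius argument for the induced tree. Suppose there exists $A \in S(G)$ with $\Mm(A) \geq k+2$, so every eigenvalue of $A$ has multiplicity at least $k+2$. We may assume $n-k \geq 2$ (else $k+1 \geq n$ and the bound is trivial). Let $V \subseteq V(G)$ be the vertex set on which $G$ induces the tree $T$, with $|V| = n-k$, and let $B = A[V]$ be the corresponding principal submatrix; since $T$ is induced, $B \in S(T)$.

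Next I would invoke Cauchy's interlacing inequalities. Writing the eigenvalues of $A$ as $\lambda_1 \geq \lambda_2 \geq \cdots \geq \lambda_n$ and of $B$ as $\mu_1 \geq \mu_2 \geq \cdots \geq \mu_{n-k}$, one has $\lambda_i \geq \mu_i \geq \lambda_{i+k}$. Since the largest eigenvalue $\lambda_1$ of $A$ has multiplicity at least $k+2$, we get $\lambda_1 = \lambda_2 = \cdots = \lambda_{k+2}$, and in particular $\mu_1 = \mu_2 = \lambda_1$. Thus the largest eigenvalue of $B$ has multiplicity at least $2$ in $B$.

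The final step is to show this is impossible: for any tree $T$ and any $B \in S(T)$, the largest eigenvalue of $B$ is simple. Because $T$ has no cycles, one may choose $\epsilon_v \in \{+1,-1\}$ for each $v \in V(T)$ such that $\epsilon_u \epsilon_v B_{uv} > 0$ for every edge $uv \in E(T)$; this is done by rooting $T$ and propagating signs along edges. With $D = \mathrm{diag}(\epsilon_v)$, the matrix $DBD$ lies in $S(T)$ and has positive off-diagonal entries on edges and zeros elsewhere. For $c$ large, $DBD + cI$ is a nonnegative matrix with connected underlying graph $T$, hence is irreducible, and by Perron--Frobenius its largest eigenvalue is simple. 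Since $DBD$ is similar to $B$, the largest eigenvalue of $B$ is simple, contradicting $\mu_1 = \mu_2$.

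The real content of the proof is the interaction between Cauchy interlacing (which converts the assumed bound $\Mm(A) \geq k+2$ into an eigenvalue of multiplicity $\geq 2$ in the $(n-k) \times (n-k)$ tree submatrix $B$) and the Perron--Frobenius simplicity for trees (which forbids it). I do not foresee a significant obstacle here: the Perron--Frobenius step is standard, and the bookkeeping in the interlacing estimate is routine.
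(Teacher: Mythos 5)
Your proof is correct and follows essentially the same route as the paper: a contradiction argument combining Cauchy interlacing on the principal submatrix indexed by the induced tree with the simplicity of the largest eigenvalue of a symmetric matrix whose graph is a tree. The only difference is that you supply a self-contained signing/Perron--Frobenius proof of that simplicity fact, which the paper instead cites from Johnson and Leal Duarte.
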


\begin{proof}
Suppose $A \in S(G)$ with $\Mm(A)=\Mm(G)$ and let $A_1 \in S(T)$ be  its principal $(n-k) \times (n-k)$ 
submatrix for some induced tree $T$ of $G$ on $n-k$ vertices. If $\Mm(G)\geq k+2$, then all the eigenvalues of $A$ have multiplicity at least $k+2$. So, let 
us denote the eigenvalues of $A$ by 
$$\lambda_1=\ldots=\lambda_{k+2}\geq \lambda_{k+3}\geq \ldots\geq \lambda_{n}.$$
By interlacing, the submatrix $A_1$ has eigenvalues
$$\lambda_1=\lambda_{2}\geq \mu_{3}\geq \ldots\geq \mu_{n-k},$$
which contradicts \cite[Remark 4]{MR1902112} that states that  the largest eigenvalue of a tree must be simple.
\end{proof}

Corollary \ref{cor:small} will show that even for $k=1$ there exist graphs for which the inequality in Theorem \ref{thm:tree} is strict. Moreover, Corollary \ref{parallel} will show that the converse of the above theorem is not true. 

In the next proposition we gather some estimates for $\Mm$ for different products of graphs. The proofs of those estimates are not difficult and are straightforward modifications of the proofs of Theorem 4.1, Theorem 4.2 and Theorem 4.3 in \cite{MR3208410}. 

\begin{proposition}
For any two graphs $G$ and $H$ we have:
\begin{enumerate}
\item $\Mm(G \times H) \geq \Mm(G)\Mm(H)$
\item $\Mm(G \square H) \geq \Mm(G)\Mm(H)$
\item  $\Mm(G \boxtimes H) \geq \Mm(G)\Mm(H)$
\end{enumerate}
\end{proposition}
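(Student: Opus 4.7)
The plan is to establish all three lower bounds by the same strategy: pick $A \in S(G)$ realizing $\Mm(A) = \Mm(G) = p$ and $B \in S(H)$ realizing $\Mm(B) = \Mm(H) = q$, form a tensor-product-type matrix in the appropriate set $S(\cdot)$, and show that each of its eigenvalues has multiplicity at least $pq$. Write the distinct eigenvalues of $A$ as $\lambda_i$ with multiplicities $a_i \geq p$, and those of $B$ as $\mu_j$ with multiplicities $b_j \geq q$. In every case the eigenvalues of the constructed matrix will be an explicit function of $(\lambda_i, \mu_j)$, and the multiplicity of any such eigenvalue $\nu$ will be the sum of products $a_i b_j$ over the preimages of $\nu$, hence at least $pq$.

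For part (2) I would take $M_\square = A \otimes I_m + I_n \otimes B$; a direct check of the two non-trivial types of off-diagonal position $((i,i'),(j,j'))$ shows $M_\square \in S(G \square H)$ with no assumption on the diagonals of $A$ and $B$. Its eigenvalues are the sums $\lambda_i + \mu_j$, and the multiplicity analysis above yields $\Mm(M_\square) \geq pq$.

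For part (3) I would pick $\alpha, \beta \in \R$ avoiding the finitely many forbidden values $-a_{ii}$ and $-b_{jj}$, and set $M_\boxtimes = (A + \alpha I_n) \otimes (B + \beta I_m)$. A case-by-case verification of the three edge-types of $G \boxtimes H$ (those with $i = j$ and $i' \neq j'$, those with $i \neq j$ and $i' = j'$, and those with both $i \neq j$ and $i' \neq j'$) shows $M_\boxtimes \in S(G \boxtimes H)$. Its eigenvalues are the products $(\lambda_i + \alpha)(\mu_j + \beta)$, and the same multiplicity count gives the bound.

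For part (1) the natural candidate is $M_\times = A \otimes B$. The partial-diagonal entries $a_{ii} b_{i'j'}$ (for $i = j$, $i' \neq j'$) and $a_{ij} b_{i'i'}$ (for $i \neq j$, $i' = j'$) must vanish for $M_\times$ to lie in $S(G \times H)$, which forces both $A$ and $B$ to have zero diagonals. The main obstacle is therefore to arrange, in parallel with the proof of Theorem~4.1 of \cite{MR3208410}, that $\Mm(G)$ and $\Mm(H)$ can be realized by zero-diagonal matrices in $S(G)$ and $S(H)$; once this reduction is in place, $M_\times \in S(G \times H)$ has eigenvalues $\lambda_i \mu_j$ with multiplicities at least $a_i b_j \geq pq$, completing (1).
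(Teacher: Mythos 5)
The paper itself gives no proof of this proposition, deferring to Theorems 4.1--4.3 of \cite{MR3208410}; your constructions for the Cartesian and strong products are exactly the standard ones that those proofs adapt, and both are correct and complete. For $A\otimes I_m+I_n\otimes B$ the membership in $S(G\square H)$ holds unconditionally, and for $(A+\alpha I_n)\otimes(B+\beta I_m)$ the genericity condition $\alpha\neq -a_{ii}$, $\beta\neq -b_{jj}$ is precisely what is needed; in both cases every eigenvalue of the product matrix is of the form $f(\lambda_i,\mu_j)$ for at least one pair $(i,j)$, so its multiplicity is at least $a_ib_j\geq \Mm(G)\Mm(H)$. Parts (2) and (3) are therefore fine.

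Part (1) has a genuine gap. You correctly identify that $A\otimes B\in S(G\times H)$ forces both $A$ and $B$ to have zero diagonal (whenever $G$ and $H$ each contain an edge), and you correctly identify that the whole proof then rests on the claim that $\Mm(G)$ and $\Mm(H)$ can be \emph{realized} by zero-diagonal matrices --- but you do not prove that claim, you only announce that it should be "arranged." This is not a formality. The zero-diagonal condition is a genuine restriction on $S(G)$: it forces $\tr A=0$, and more importantly it is $n$ scalar constraints on the realization, so there is no a priori reason that a multiplicity list attaining $\Mm(G)$ survives it; eigenvalue multiplicities are not stable under the perturbations one would naively use to kill the diagonal. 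This reduction is exactly the nontrivial content that separates the tensor-product case from the other two (and is where the proof of the corresponding theorem in \cite{MR3208410} has to do its work), so as written your argument establishes (2) and (3) but only reduces (1) to an unproven lemma.
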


\section{Some Constructions}\label{Sec Const}

In this section we leave our discussion of the parameter $\Mm$ to develop some technical tools that we will need later in the paper to construct matrices with large $\Mm.$ First we state a construction introduced by Fiedler \cite{MR0364288}.

\begin{theorem}[\cite{MR0364288}]\label{thm:Fiedler}
Let $A$ be a symmetric $m \times m$ matrix with eigenvalues $\alpha_1,\ldots,\alpha_m$ and let $u$, $||u||=1$, be a unit eigenvector corresponding to $\alpha_1$. Let $B$ be a symmetric $n \times n$ matrix with eigenvalues $\beta_1, \ldots, \beta_n$ and let $v$, $||v||=1$, be a unit eigenvector corresponding to $\beta_1.$

Then for any $\rho$, the matrix
 $$C=\npmatrix{A & \rho uv^T \\ \rho vu^T & B}$$
 has eigenvalues $\alpha_2,\ldots,\alpha_m, \beta_2, \ldots,\beta_m, \gamma_1,\gamma_2$, where $\gamma_1, \gamma_2$ are eigenvalues of the matrix 
 $$\hat{C}=\npmatrix{\alpha_1 & \rho \\ \rho & \beta_1}.$$  
\end{theorem}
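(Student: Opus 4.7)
The plan is to exhibit an orthonormal eigenbasis of $C$ directly, by using the spectral data of $A$ and $B$ and exploiting the block structure of the off-diagonal rank-one coupling $\rho uv^T$.

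First, I choose orthonormal eigenbases $u=u_1,u_2,\ldots,u_m$ of $A$ and $v=v_1,v_2,\ldots,v_n$ of $B$, where $Au_i=\alpha_i u_i$ and $Bv_j=\beta_j v_j$. For each $i\geq 2$ the orthogonality $u^T u_i=0$ gives
$$C\npmatrix{u_i\\0}=\npmatrix{Au_i\\\rho v(u^Tu_i)}=\npmatrix{\alpha_i u_i\\0}=\alpha_i\npmatrix{u_i\\0},$$
so $\alpha_2,\ldots,\alpha_m$ are eigenvalues of $C$. By the same calculation with the roles swapped, $v^T v_j=0$ for $j\geq 2$ yields eigenvectors $\npmatrix{0\\v_j}$ of $C$ with eigenvalues $\beta_j$.

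Next, I restrict $C$ to the two-dimensional subspace $W=\mathrm{span}\bigl\{\npmatrix{u\\0},\npmatrix{0\\v}\bigr\}$. A direct computation shows
$$C\npmatrix{au\\bv}=\npmatrix{(a\alpha_1+b\rho)u\\(a\rho+b\beta_1)v},$$
which means $W$ is $C$-invariant and the matrix of $C|_W$ in the basis $\npmatrix{u\\0},\npmatrix{0\\v}$ is precisely $\hat C$. Hence the eigenvalues of $C|_W$ are $\gamma_1,\gamma_2$, the eigenvalues of $\hat C$, contributing the last two eigenvalues of $C$.

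Finally, I verify that the $m+n$ vectors assembled above form an orthonormal basis of $\mathbb R^{m+n}$: the first $m-1$ live in the top block orthogonal to $u$, the next $n-1$ in the bottom block orthogonal to $v$, and the last two span $W$; orthogonality across the three groups is immediate from the block form, so linear independence follows. Since a symmetric matrix of size $m+n$ has exactly $m+n$ eigenvalues counted with multiplicity, this enumeration is complete. The only real bookkeeping obstacle is ensuring the two-dimensional invariant subspace $W$ is handled separately from the orthogonal complements in each block; once the decomposition $\mathbb R^{m+n}=(u^\perp\oplus 0)\oplus(0\oplus v^\perp)\oplus W$ is made explicit, the proof reduces to the three short calculations above.
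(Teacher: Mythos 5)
Your proof is correct. The paper states this result only as a citation to Fiedler without reproducing a proof, and your argument --- exhibiting the orthogonal decomposition $\R^{m+n}=(u^\perp\oplus 0)\oplus(0\oplus v^\perp)\oplus W$ on which $C$ acts block-diagonally with $2\times 2$ block $\hat C$ --- is exactly the orthogonal-similarity argument the paper uses to prove its generalization, Theorem \ref{thm:genFiedler}; your construction is the special case $k=l=1$, $U_1=u$, $V_1=v$, $R=\rho$.
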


Next we offer a generalization of Theorem \ref{thm:Fiedler}.  

\begin{theorem}\label{thm:genFiedler}
Let $A \in M_n(\R)$ and $B\in M_m(\R)$ be  symmetric matrices, and  let $U=\npmatrix{U_1 & U_2} \in M_n(\R)$ and $V=\npmatrix{V_1 & V_2} \in M_m(\R)$ be orthogonal matrices such that
$$U^T AU=\npmatrix{D_1 & 0 \\ 0 & D_2} \; \text{ and }\; V^T BV=\npmatrix{E_1 & 0 \\ 0 & E_2},$$
where $D_1 \in M_{k}(\R)$ and $E_1 \in  M_l(\R)$ and the partition of the matrices $U$ and $V$ indicated above is consistent with the orders of $D_i$ and $E_i,$ $i=1,2$. 
Let 
 $$C=\npmatrix{A &U_1 R V_1^T \\  V_1 R^T U_1^T & B} \, \text{ and } \, W=\npmatrix{U_1 &0 & U_2 &0 \\ 0&V_1 & 0 &V_2}$$
for any $R \in \R^{k \times l}$.  Then $W$ is an orthogonal matrix and 
 $$W^TCW=\npmatrix{D_1 & R & 0 & 0 \\ 
  R^T & E_1 & 0 & 0\\
  0&0&D_2&0\\
  0&0&0&E_2}.$$
\end{theorem}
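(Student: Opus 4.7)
The plan is to verify both claims by straightforward block multiplication, exploiting the orthogonality of $U$ and $V$ together with the given diagonalizations. First I would check that $W$ is orthogonal by computing $W^T W$: its nonzero blocks are precisely the products $U_i^T U_j$ (coming from the $U$-columns) and $V_i^T V_j$ (coming from the $V$-columns), and the orthogonality of $U$ and $V$ forces $U_i^T U_j = \delta_{ij} I$ and $V_i^T V_j = \delta_{ij} I$ of the appropriate sizes, so the product assembles into $I_{n+m}$.

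For the main identity, the critical observation is that the diagonalizations $U^T A U = \mathrm{diag}(D_1,D_2)$ and $V^T B V = \mathrm{diag}(E_1,E_2)$ rewrite as $A U_1 = U_1 D_1$, $A U_2 = U_2 D_2$, $B V_1 = V_1 E_1$, $B V_2 = V_2 E_2$. I would first compute the intermediate product $CW$ column block by column block; the off-diagonal blocks of $C$ collapse because $V_1^T V_1 = I_l$ and $U_1^T U_1 = I_k$ while $V_1^T V_2 = 0$ and $U_1^T U_2 = 0$, yielding
\[
CW = \npmatrix{U_1 D_1 & U_1 R & U_2 D_2 & 0 \\ V_1 R^T & V_1 E_1 & 0 & V_2 E_2}.
\]

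Next I would left-multiply by $W^T$ block by block. The diagonal blocks $D_1, E_1, D_2, E_2$ survive on the main diagonal of $W^T C W$ through the identities $U_i^T U_i = I$ and $V_i^T V_i = I$; the coupling blocks $R$ and $R^T$ survive in positions $(1,2)$ and $(2,1)$ through $U_1^T U_1 = I_k$ and $V_1^T V_1 = I_l$; and every other block vanishes because it contains a factor $U_i^T U_j$ or $V_i^T V_j$ with $i \neq j$. Assembling these surviving blocks reproduces exactly the stated form for $W^T C W$.

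There is no genuine obstacle: the whole statement reduces to tracking block multiplications. The only thing that requires care is the block ordering of $W$: the \emph{active} parts $U_1, V_1$ (the ones coupled by $R$) occupy the first two column blocks, while the \emph{inert} parts $U_2, V_2$ are placed in the trailing column blocks, which is precisely what produces the block-diagonal structure in the trailing $2 \times 2$ subblock of $W^T C W$ and isolates the $\npmatrix{D_1 & R \\ R^T & E_1}$ mixing block in the top-left corner (recovering Theorem \ref{thm:Fiedler} when $k=l=1$ and $R$ is a scalar).
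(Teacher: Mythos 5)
Your proof is correct and follows essentially the same route as the paper: both verify $W^TW=I_{n+m}$ and the block identity by direct multiplication, using the orthogonality relations $U_i^TU_j=\delta_{ij}I$, $V_i^TV_j=\delta_{ij}I$ and the diagonalization hypotheses (your form $AU_i=U_iD_i$ is just the paper's $U_i^TAU_j=\delta_{ij}D_i$ restated). Nothing is missing.
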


\begin{proof}
 Since $U$ and $V$ are orthogonal matrices, it follows that 
 $$\npmatrix{I_k&0\\0&I_{n-k}}=\npmatrix{U_1^T\\U_2^T} \npmatrix{U_1 & U_2}=\npmatrix{U_1^T U_1 & U_1^TU_2\\ U_2^T U_1 & U_2^T U_2}$$
 and hence 
 \begin{equation}\label{eq:U}
 U_1^T U_1=I_k, \, U_2^T U_2=I_{n-k}, \, U_1^TU_2=0, \, U_2^T U_1=0.
 \end{equation}
  Similarly we get:
  \begin{equation}\label{eq:V}
 V_1^T V_1=I_l, \, V_2^T V_2=I_{m-l}, \, V_1^TV_2=0, \, V_2^T V_1=0.
 \end{equation}
 Moreover, from  
 $$U^T AU=\npmatrix{D_1 & 0 \\ 0 & D_2} \text{ and } V^T AV=\npmatrix{E_1 & 0 \\ 0 & E_2}$$ we get:
 \begin{equation}\label{eq:UAU} 
 U_1^T A U_1=D_1,\, U_2^T A U_2=D_2, \, U_1^TAU_2=0, \,U_2^T A U_1=0, \, 
 \end{equation}
 and 
 \begin{equation}\label{eq:VBV}
   V_1^T B V_1=E_1, \, V_2^T B V_2=E_2, \, V_1^T BV_2=0, \, V_2^TB V_1=0. 
 \end{equation}
 Using equations (\ref{eq:U}), (\ref{eq:V}), (\ref{eq:UAU}) and (\ref{eq:VBV}) we can easily show that $W^TW=I_{n+m}$ and 
 $$W^TCW=\npmatrix{D_1 & R & 0 & 0 \\ 
  R^T & E_1 & 0 & 0\\
  0&0&D_2&0\\
  0&0&0&E_2},$$ which completes the proof.
\end{proof}

In the discussion below, we are assuming the definitions and notations from Theorem \ref{thm:genFiedler}. In our application, we will take:
\begin{equation}\label{eqn:FiedlerChoice}
D_0=\npmatrix{D_1 & R \\ R^T & E_1}=\npmatrix{a_1 & 0 & b \cos \alpha & b \sin\alpha \\
0 & a_1 & -b \sin \alpha & b \cos \alpha \\ b \cos \alpha & -b \sin \alpha & a_2 & 0 \\
b \sin \alpha & b \cos \alpha & 0 & a_2},
\end{equation}
and the following lemma deals with this situation. 

\begin{lemma}\label{D0}
 If $b=\sqrt{t(a_1-a_2+t)}$, where $a_1 > a_2-t$ and  $t>0$, then
the eigenvalues of matrix $D_0$, defined in \eqref{eqn:FiedlerChoice}, are equal to $a_1+t,a_1+t, a_2-t,a_2-t$ and the orthogonal matrix that diagonalizes $D_0$ is equal to
 \begin{equation}\label{eqn:U0}
U_0=\frac{1}{\sqrt{a_1-a_2+2t}}\npmatrix{ \frac{b \sin\alpha}{\sqrt{t}}&\frac{b \cos\alpha}{\sqrt{t}}& -\sqrt{t}\sin\alpha&-\sqrt{t}\cos\alpha \\
                                                    \frac{b \cos\alpha}{\sqrt{t}}  &-\frac{b \sin\alpha}{\sqrt{t}}&-\sqrt{t}\cos\alpha&\sqrt{t}\sin\alpha \\
                                                   0   & \sqrt{t}& 0 &\frac{b}{ \sqrt{t}}\\
                                                     \sqrt{t}  & 0 & \frac{b}{ \sqrt{t}} & 0  }.
\end{equation} 
\end{lemma}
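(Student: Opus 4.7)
The plan is to verify the lemma by direct but structured computation. The key algebraic fact, used repeatedly, is that the hypothesis $b=\sqrt{t(a_1-a_2+t)}$ is equivalent to $b^2/t = a_1-a_2+t$; adding $t$ to both sides gives $b^2/t+t = a_1-a_2+2t$, which is precisely the squared normalisation constant appearing in $U_0$.

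Before working with $U_0$, I would verify the spectrum abstractly. The matrix $D_0$ has the $2\times 2$ block form
\[
D_0=\npmatrix{a_1 I_2 & bQ \\ bQ^T & a_2 I_2}, \qquad Q=\npmatrix{\cos\alpha & \sin\alpha \\ -\sin\alpha & \cos\alpha},
\]
and $Q$ is orthogonal, so $QQ^T=I_2$. A Schur-complement computation then yields $\det(D_0-\lambda I_4)=((a_1-\lambda)(a_2-\lambda)-b^2)^2$, and substituting $b^2 = t(a_1-a_2+t)$ factorises the inner bracket as $(\lambda-(a_1+t))(\lambda-(a_2-t))$. This already gives the claimed spectrum $\{a_1+t,a_1+t,a_2-t,a_2-t\}$ (with $a_1>a_2-t$ guaranteeing that these two values are distinct when we want them to be). It remains only to identify the columns of $U_0$ with an orthonormal eigenbasis.

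For orthogonality of $U_0$, I would check that each column has unit norm and that distinct columns are orthogonal. Each column consists of entries of the form (trigonometric factor)$\,\cdot b/\sqrt{t}$ or (trigonometric factor)$\,\cdot\sqrt{t}$ and some zeros; the identity $\cos^2\alpha+\sin^2\alpha=1$ collapses the squared norm to $b^2/t+t=a_1-a_2+2t$, cancelling the prefactor. The pairwise inner products vanish by a similarly short trigonometric cancellation, the alternating $\pm\sin\alpha$ and $\pm\cos\alpha$ patterns built into $U_0$ doing the bookkeeping.

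Finally, I would verify $D_0 U_0 = U_0\Lambda$ with $\Lambda=\mathrm{diag}(a_1+t,a_1+t,a_2-t,a_2-t)$ by a column-by-column check. Writing $c=\cos\alpha$, $s=\sin\alpha$ and taking the unnormalised first column $(bs/\sqrt{t},\,bc/\sqrt{t},\,0,\,\sqrt{t})^T$, the four coordinates of $D_0$ applied to it simplify, using the identity $b^2+a_2 t = t(a_1+t)$, to $(a_1+t)$ times the original column. For the third column $(-\sqrt{t}s,\,-\sqrt{t}c,\,0,\,b/\sqrt{t})^T$ the complementary identity $b^2-a_1 t = -t(a_2-t)$ produces the eigenvalue $a_2-t$. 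Columns $2$ and $4$ are structurally identical to columns $1$ and $3$, and the same identities suffice. The only real obstacle is bookkeeping — keeping the signs and the $b/\sqrt{t}$ versus $\sqrt{t}$ entries straight across four columns — but once one column in each eigenvalue block is verified, the others follow from the same two identities.
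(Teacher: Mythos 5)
Your proposal is correct and matches the paper's approach: the paper simply states that the lemma ``can be checked by a straightforward direct calculation,'' and your plan carries out exactly that verification (the block form with the rotation $Q$, the Schur complement for the characteristic polynomial, and the two identities $b^2+a_2t=t(a_1+t)$ and $b^2-a_1t=-t(a_2-t)$ for the column-by-column eigenvector check all work as you describe). The only difference is that you organize the computation more explicitly than the paper bothers to.
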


\begin{proof}
The statements in this lemma can be checked by a straightforward direct calculation. 
%
 \end{proof}

%
Let $A \in M_n(\R)$ and $B\in M_m(\R)$ be  symmetric matrices, such that $$U^T AU=\npmatrix{
a_1 I_2 & 0 \\ 0   & D_2} \; \text{, }\; V^T BV=\npmatrix{a_2 I_2&  0 \\  0 & E_2}$$
for some orthogonal matrices $U=\npmatrix{U_1 & U_2} \in M_n(\R)$ and $V=\npmatrix{V_1 & V_2} \in M_m(\R)$ as in Theorem  \ref{thm:genFiedler}.
Let us define $$R=\npmatrix{b \cos \alpha & b \sin\alpha \\  -b \sin \alpha & b \cos \alpha}$$ with $b=\sqrt{t(a_1-a_2+t)}$, $a_1 > a_2-t$ and 
$t>0$.
Now Theorem  \ref{thm:genFiedler} tells us that the matrix
 $$C=\npmatrix{A &U_1 R V_1^T \\  V_1 R^T U_1^T & B}$$
is orthogonally similar to 
 $$\npmatrix{(a_1+t)  I_2&  0 & 0 & 0 \\ 
 0 & (a_2-t)I_2 & 0 & 0\\
 0&0&D_2&0\\
 0&0&0&E_2}$$
with orthogonal similarity  $\hat W=\npmatrix{U_1 &0 & U_2 &0 \\ 0&V_1 & 0 &V_2}\npmatrix{U_0 & 0 \\ 0 & I}$, where 
$U_0$ is defined in \eqref{eqn:U0}.

%

If we assume that $U_1$ and $V_1$ have no zero entries, then the first four columns of $\hat W$ also do not have any zero entries, for all but a finite set of $\alpha$. Since we may choose $\alpha$ arbitrarily, 
we have found a matrix $C$ with two orthogonal eigenvectors corresponding to $a_1+t$ that do not have any zero entries, 
and two orthogonal eigenvectors corresponding to $a_2-t$ that do not have any zero entries.

This discussion yields the following corollary. 

\begin{corollary}\label{cor:one_step}
Let $B_i \in M_{n_i}(\R),$ $i=1,2,$ be a symmetric matrix with the spectrum $(\lambda_i, \lambda_i,\sigma_i),$ where $\sigma_i$ is a list of $n_i-2$ real numbers. Let $t>0$ and assume that $B_i$  has at least two orthogonal 
eigenvectors corresponding to $\lambda_i$ that do not contain any zero elements and that $\lambda_1> \lambda_2-t$. 

Then there exists an $n_1 \times n_2$ matrix $S$ that does not contain any zero elements, such that  
 $$C=\npmatrix{B_1 & S \\ S^T & B_2} \in M_{n_1+n_2}(\R)$$
 has the spectrum $(\lambda_1+t, \lambda_1+t, \lambda_2-t, \lambda_2-t, \sigma_1, \sigma_2)$. Moreover, $C$ has at least two orthogonal eigenvectors without zero entries corresponding to $\lambda_1+t$ and at lest two orthogonal eigenvectors without zero entries corresponding to $\lambda_2-t$.
\end{corollary}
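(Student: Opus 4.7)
The plan is to package the construction described in the discussion immediately preceding the corollary statement into a formal argument. First I would set $a_1 = \lambda_1$ and $a_2 = \lambda_2$, and choose orthogonal matrices $U = \npmatrix{U_1 & U_2} \in M_{n_1}(\R)$ and $V = \npmatrix{V_1 & V_2} \in M_{n_2}(\R)$ so that the columns of $U_1$ are two orthonormal eigenvectors of $B_1$ for $\lambda_1$ without zero entries (given by hypothesis), and similarly for $V_1$ relative to $B_2$ and $\lambda_2$. Then $U^T B_1 U$ and $V^T B_2 V$ have the block-diagonal form required by Theorem \ref{thm:genFiedler}, with $D_1 = \lambda_1 I_2$ and $E_1 = \lambda_2 I_2$.

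Next I would take $R = \npmatrix{b\cos\alpha & b\sin\alpha \\ -b\sin\alpha & b\cos\alpha}$ with $b = \sqrt{t(\lambda_1 - \lambda_2 + t)}$, which is a real number since $\lambda_1 > \lambda_2 - t$ and $t > 0$. Setting $S = U_1 R V_1^T$, Theorem \ref{thm:genFiedler} applied to $C = \npmatrix{B_1 & S \\ S^T & B_2}$ shows that $C$ is orthogonally similar to the block-diagonal matrix whose upper-left $4 \times 4$ block is the matrix $D_0$ of \eqref{eqn:FiedlerChoice} and whose remaining diagonal blocks carry the spectra $\sigma_1$ and $\sigma_2$. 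Lemma \ref{D0} then diagonalises $D_0$ via $U_0$, with eigenvalues $\lambda_1 + t, \lambda_1 + t, \lambda_2 - t, \lambda_2 - t$. Combining the similarities, $C$ has the claimed spectrum, and two orthogonal eigenvectors of $C$ for $\lambda_1 + t$ (resp.\ $\lambda_2 - t$) are the first (resp.\ middle) two columns of $\hat W = \npmatrix{U_1 & 0 & U_2 & 0 \\ 0 & V_1 & 0 & V_2}\npmatrix{U_0 & 0 \\ 0 & I}$.

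The step I expect to require the most care is the genericity argument establishing that $\alpha$ can be chosen so that simultaneously (i) $S$ has no zero entries and (ii) the four distinguished columns of $\hat W$ have no zero entries. A direct computation shows that each entry $(S)_{ij}$ has the form $b\bigl[\cos\alpha \cdot p_{ij} + \sin\alpha \cdot q_{ij}\bigr]$ where $p_{ij} = (U_1)_{i1}(V_1)_{j1} + (U_1)_{i2}(V_1)_{j2}$ and $q_{ij} = (U_1)_{i1}(V_1)_{j2} - (U_1)_{i2}(V_1)_{j1}$. If both $p_{ij}$ and $q_{ij}$ vanished, then after dividing one would obtain $(V_1)_{j1}^2 + (V_1)_{j2}^2 = 0$, impossible since $V_1$ has no zero entries. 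Hence every entry of $S$ is a nontrivial trigonometric function of $\alpha$, vanishing only on a finite subset of $[0, 2\pi)$. An analogous inspection of the first four columns of $\hat W$, whose entries are of the form $(U_1)_{ik}\cos\alpha + (U_1)_{ik'}\sin\alpha$ (or the analogous combinations involving $V_1$) up to nonzero scaling from $U_0$, shows the same dichotomy. The union of all bad values of $\alpha$ is finite, so any $\alpha$ outside it yields $S$ and the required eigenvectors with no zero entries, completing the proof.
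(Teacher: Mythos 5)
Your proposal is correct and follows essentially the same route as the paper, which derives the corollary from the discussion built on Theorem \ref{thm:genFiedler} and Lemma \ref{D0} with the rotation-parameter $\alpha$ chosen generically. You in fact supply one detail the paper leaves implicit, namely the verification that the entries of $S=U_1RV_1^T$ are nontrivial trigonometric functions of $\alpha$ (via $p_{ij}^2+q_{ij}^2=((U_1)_{i1}^2+(U_1)_{i2}^2)((V_1)_{j1}^2+(V_1)_{j2}^2)\neq 0$), so that $S$ itself has no zero entries for all but finitely many $\alpha$.
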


We will also make use of the following construction from \cite{MR2098598}.

\begin{lemma}[\cite{MR2098598}]\label{HS04}
Let $B$ be a symmetric $m \times m$ matrix with eigenvalues $\mu_1, \mu_2,\ldots,\mu_m$, and let $u$ be an eigenvector corresponding to $\mu_1$ normalized so that $u^Tu=1$. Let $A$ be an $n \times n$ symmetric matrix with a diagonal element $\mu_1$
 \begin{equation}\label{c1f1}
  A=\npmatrix{A_1 & b \\
               b^T & \mu_1}
 \end{equation}
 and eigenvalues $\lambda_1, \ldots, \lambda_n.$
 Then the matrix 
  \[C=\npmatrix{A_1 & bu^T \\
               ub^T & B}\]
               has eigenvalues $\lambda_1,\ldots,\lambda_n,\mu_2,\ldots, \mu_m$. 
\end{lemma}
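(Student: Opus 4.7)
The plan is to exhibit an orthogonal similarity that block-diagonalizes $C$ into $A$ and $\mathrm{diag}(\mu_2,\ldots,\mu_m)$, at which point the eigenvalue list is immediate. The key observation is that since $u$ is a unit eigenvector of $B$ for $\mu_1$, we can complete it to an orthogonal matrix that diagonalizes $B$ and whose first column is exactly $u$; then the off-diagonal blocks $bu^T$ and $ub^T$, which are rank-one in the $u$-direction, collapse neatly under this change of basis.

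Concretely, let $V_2\in M_{m\times (m-1)}(\R)$ be a matrix whose columns form an orthonormal basis of eigenvectors of $B$ corresponding to $\mu_2,\ldots,\mu_m$, and set $V=\npmatrix{u & V_2}$. Then $V$ is orthogonal and
$$V^T B V=\mathrm{diag}(\mu_1,\mu_2,\ldots,\mu_m).$$
Because the columns of $V$ are orthonormal, $u^TV=\npmatrix{1 & 0 & \cdots & 0}$. Define the orthogonal matrix $W=I_{n-1}\oplus V$ and conjugate:
$$W^TCW=\npmatrix{A_1 & bu^TV \\ V^Tub^T & V^TBV}=\npmatrix{A_1 & b & 0 \\ b^T & \mu_1 & 0 \\ 0 & 0 & \mathrm{diag}(\mu_2,\ldots,\mu_m)},$$
where the first two block rows/columns use the identification $bu^TV=\npmatrix{b & 0}$.

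I would then observe that the upper-left $n\times n$ block is precisely $A$, so up to an orthogonal (in fact block-permutation) similarity, $C$ is orthogonally similar to $A\oplus\mathrm{diag}(\mu_2,\ldots,\mu_m)$. Hence the spectrum of $C$ is $\lambda_1,\ldots,\lambda_n,\mu_2,\ldots,\mu_m$, as claimed.

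There is no real obstacle: the only point requiring care is checking that $u^TV=e_1^T$, which is a direct consequence of orthonormality of the columns of $V$, and keeping the block sizes consistent when writing $bu^TV$ and $V^Tub^T$ in partitioned form. This is essentially a special case of Theorem \ref{thm:genFiedler} in which the coupling matrix $R$ between the ``$\mu_1$ block'' of $B$ and the complementary block of $A$ is the scalar $1$ (paired with $b$), so one could alternatively cite that theorem directly.
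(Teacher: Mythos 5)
Your argument is correct. Note that the paper does not actually prove Lemma \ref{HS04}; it is quoted from \cite{MR2098598}, and the only internal hint at a proof is Remark \ref{rem:eigenvectors}, which records explicit eigenvectors of $C$: $\npmatrix{v_i \\ \alpha_i u}$ for $\lambda_i$ and $\npmatrix{0 \\ u_i}$ for $\mu_i$, $i\geq 2$. Your route --- conjugating by $W=I_{n-1}\oplus V$ with $V=\npmatrix{u & V_2}$ to exhibit $C$ as orthogonally similar to $A\oplus\mathrm{diag}(\mu_2,\ldots,\mu_m)$ --- is a clean, self-contained version of the same mechanism: the columns $\npmatrix{0\\ u_i}$ of your $W$ are exactly the eigenvectors of Remark \ref{rem:eigenvectors}, and restricting to the complementary block recovers $A$ and hence the $\lambda_i$'s together with their eigenvectors $\npmatrix{v_i\\ \alpha_i u}$. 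The one point where you are slightly imprecise is the closing aside: as stated, Theorem \ref{thm:genFiedler} couples eigenvector directions of \emph{both} matrices, and here the vector $b$ sits in the last coordinate of $A$, not along an eigenvector of $A_1$. The lemma can still be forced into that framework by the degenerate choice $U=U_1$ a full diagonalizer of $A_1$ (so $U_2$ empty), $V_1=u$, and $R=U^Tb$ (an $(n-1)\times 1$ matrix, not the scalar $1$), but your direct computation is simpler and is the better thing to write down. No gaps otherwise: the only facts used are that $u$ extends to an orthogonal eigenbasis of $B$ (choosing the remaining $\mu_1$-eigenvectors, if any, orthogonal to $u$) and that $u^TV=e_1^T$, both of which you justify.
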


\begin{remark}\label{rem:eigenvectors}
In \cite{MR2098598} the eigenvectors of $C$ in terms of the eigenvectors of $A$ and $B$ are given in the following way. Let 
 $$\npmatrix{v_i \\ \alpha_i}, \, v_i \in \R^{n-1}, \, \alpha_i \in \R$$
 be the orthonormal set of eigenvectors of $A$ corresponding to $\lambda_i,$ $i=1,2,
 \ldots,n$, and let $u_i,$ $i=2,\ldots,m,$ together with $u$ be the orthonormal set of eigenvectors corresponding to $\mu_i$. Then $C$ has the following eigenvectors: eigenvector of $C$ corresponding to  eigenvalue $\lambda_i,$ $i=1,2,\ldots,n$, is equal to:
  $$\npmatrix{v_i \\ \alpha_i u},$$
  and eigenvector of $C$ corresponding to $\mu_i,$ $i=2,\ldots,m$, is equal to:
  $$\npmatrix{0 \\ u_i}.$$    
\end{remark}

\section{Graphs with large $\Mm$}\label{Sec 3}

\subsection{Graphs having $\mr _+ G=2$}



Graphs whose minimal rank is small will typically have large $\Mm$. 
We denote by ${\rm mr}_+(G)$ the minimum rank among all positive semidefinite symmetric matrices corresponding to $G$. A characterisation of graphs with $\mr_+(G) \leq 2$ given in \cite{MR2111528} is restated below.

\begin{theorem}[\cite{MR2111528}]\label{thm:BvHL}
Let $G$ be a graph on $n$ vertices. Then ${\rm mr}_+(G)\leq 2$ if and only if $G^c$ has the form  
\begin{equation}\label{mr=2}
(K_{p_1,q_1}\cup K_{p_2,q_2} \cup \ldots \cup K_{p_k,q_k})\vee K_r
\end{equation}
 for appropriate nonnegative integers $k, p_1,q_1,\ldots,p_k,q_k,r$ with $p_i+q_i>0$, $i=1,2,\ldots,k.$
\end{theorem}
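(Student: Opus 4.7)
The plan is to identify positive semidefinite matrices of rank at most $2$ with Gram matrices of vectors in $\R^2$, and then read the structure of $G^c$ off the orthogonality pattern of those vectors.

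For the forward direction, suppose $A \in S(G)$ is positive semidefinite with $\rk(A) \leq 2$. Factor $A = VV^T$ with $V \in M_{n \times 2}(\R)$ (padding by a zero column if $\rk(A) < 2$), and let $v_1, \ldots, v_n$ denote the rows of $V$, so that $a_{ij} = v_i^T v_j$. For $i \neq j$, the condition $(i,j) \in E(G^c)$ is equivalent to $v_i \perp v_j$. Let $Z = \{i : v_i = 0\}$. Every $i \in Z$ is orthogonal to every other vertex, so $Z$ is a clique in $G^c$ joined to all vertices outside $Z$; this yields the $K_r$ factor with $r = |Z|$.

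For the nonzero $v_i$'s, I would group them by the line through the origin they span. In $\R^2$ each line has a unique perpendicular, and two nonzero vectors are orthogonal if and only if the lines they span are perpendicular. Pair up the lines that carry at least one $v_i$ into perpendicular pairs (a pair is allowed to be degenerate, consisting of just one line with no vectors on its perpendicular). A pair carrying $p$ vectors on one line and $q$ on the other produces a $K_{p,q}$ in $G^c$: within a single line there are no orthogonalities, and all cross-line products vanish. Between different pairs the spanning lines are distinct and non-perpendicular, so no orthogonality arises, giving no edges of $G^c$ across pairs. Thus $G^c \setminus Z$ is $\bigsqcup_i K_{p_i,q_i}$, and reattaching $Z$ produces the join with $K_r$.

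For the converse, given $G^c = (K_{p_1,q_1} \cup \cdots \cup K_{p_k,q_k}) \vee K_r$, I would construct a realization directly. Pick angles $\theta_1, \ldots, \theta_k \in [0, \pi)$ that are pairwise distinct modulo $\pi/2$, set $u_i = (\cos\theta_i, \sin\theta_i)$ and $u_i^\perp = (-\sin\theta_i, \cos\theta_i)$, and for each component $K_{p_i,q_i}$ assign the $p_i$ vertices of one side to nonzero scalar multiples of $u_i$ and the $q_i$ vertices of the other side to nonzero scalar multiples of $u_i^\perp$; assign every vertex of the $K_r$ part to the zero vector. A direct check shows that the inner products of these vectors are zero on exactly the required pairs, so stacking them as rows of $V$ yields $A = VV^T \in S(G)$ with $\rk(A) \leq 2$.

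The step that most deserves care is the pair-matching argument in the forward direction: one must confirm that the choice of angles modulo $\pi/2$ keeps distinct perpendicular pairs from producing any accidental orthogonality, and that a degenerate pair (vectors on only one line of a perpendicular pair) is correctly accounted for as a $K_{p,0}$ component under the convention $p_i + q_i > 0$ in \eqref{mr=2}. Everything else reduces to routine linear algebra in $\R^2$.
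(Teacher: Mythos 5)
The paper does not prove this statement; it is quoted verbatim from the cited reference \cite{MR2111528}, so there is no in-text proof to compare your attempt against. Your argument is correct and self-contained, and it is essentially the standard proof of the positive semidefinite rank-two characterization: a PSD matrix of rank at most $2$ in $S(G)$ is the Gram matrix of vectors in $\R^2$, the zero vectors account for the join with $K_r$ (each is orthogonal to everything, hence adjacent in $G^c$ to all other vertices), and the nonzero vectors, grouped by the lines they span and matched into perpendicular pairs of lines, account for the disjoint $K_{p_i,q_i}$ pieces, with a degenerate pair giving a $K_{p,0}$; the converse construction with angles pairwise distinct modulo $\pi/2$ is likewise sound. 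The one fact worth making explicit is that no line through the origin in $\R^2$ is perpendicular to itself (a nonzero vector is never orthogonal to itself), which is what guarantees both that the pairing of lines is well defined and that each part of a bipartition carries no internal edges of $G^c$.
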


In this section we  show that essentially all graphs $G$ with $\mr_+(G)=2$, have the maximal possible $\Mm$. Namely, the main purpose of this section is to show the  following theorem.

\begin{theorem}\label{thm:mr+2}
Let $G$ be a graph on $n$ vertices with $G^c$ of the form: 
 \begin{equation}\label{n-2form}
 (K_{p_0,0}\cup K_{p_1,q_1}\cup K_{p_2,q_2} \cup \ldots \cup K_{p_k,q_k})\vee K_r
 \end{equation}
 where $p_1,q_1,\ldots,p_k,q_k$ are positive integers, $p_0,k,r\geq 0$ and $(p_0,k)\ne (1,1)$. 
Then for any two nonnegative integers $n_1$ and $n_2$ that satisfy $4+n_1+n_2=n$ there exists a matrix $A \in S(G)$ 
that has precisely two distinct eigenvalues $\lambda$ and $\mu$ with multiplicities $2+n_1$ and $2+n_2$, respectively.
\end{theorem}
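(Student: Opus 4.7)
The plan is to construct $A\in S(G)$ with two eigenvalues $\lambda,\mu$ of prescribed multiplicities by decomposing $G$ according to its structural form and then assembling local building blocks via the constructive tools of Section~\ref{Sec Const}. From~\eqref{n-2form} I obtain $G=G_0\cup\overline{K_r}$, with the $r$ vertices of $\overline{K_r}$ isolated and
$$G_0=K_{p_0}\vee(K_{p_1}\cup K_{q_1})\vee\cdots\vee(K_{p_k}\cup K_{q_k})$$
a join of ``pieces'' (each piece being either $K_{p_0}$ or some $K_{p_i}\cup K_{q_i}$). The $r$ isolated vertices will be absorbed at the end as diagonal entries equal to $\lambda$ or $\mu$, so the main task is to construct a matrix $A_0\in S(G_0)$ whose spectrum is exactly $\{\lambda,\mu\}$ with flexibly chosen multiplicities.

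The core idea is to apply Corollary~\ref{cor:one_step} once at the top level, to combine two matrices whose spectra share a common doubled ``middle'' value $\theta=(\lambda+\mu)/2$. I partition the pieces of $G_0$ into two non-empty groups and let $H_1,H_2$ be their respective joins, so $G_0=H_1\vee H_2$. With the choice $t=(\mu-\lambda)/2$ one has $\theta+t=\mu$, $\theta-t=\lambda$, and the strict inequality $\theta>\theta-t$ required by the Corollary reduces to $\mu>\lambda$. Hence, if I can build matrices $B_j\in S(H_j)$ with spectrum $(\theta,\theta,\sigma_j)$ for some $\sigma_j\subseteq\{\lambda,\mu\}$ and with two orthogonal zero-free $\theta$-eigenvectors, then Corollary~\ref{cor:one_step} produces $A_0\in S(G_0)$ with spectrum $(\mu,\mu,\lambda,\lambda,\sigma_1,\sigma_2)\subseteq\{\lambda,\mu\}$. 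The hypothesis $(p_0,k)\ne(1,1)$ is precisely what ensures that such a two-sided partition exists with neither side being a single vertex (a singleton cannot host a doubled eigenvalue).

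Building the halves $B_j$ is the main technical step. When $H_j$ consists of a single piece, I construct $B_j$ directly: on $K_p$, by choosing a generic symmetric matrix with three eigenvalues $\theta,\lambda,\mu$ and doubled $\theta$ having zero-free orthonormal eigenvectors; on $K_p\cup K_q$, by choosing a block-diagonal matrix whose two blocks share $\theta$ as an eigenvalue, from which a zero-free orthonormal $\theta$-pair is obtained from unit zero-free block-level eigenvectors $u,v$ via $(u,v)/\sqrt{2}$ and $(u,-v)/\sqrt{2}$. When $H_j$ has several pieces, I would build $B_j$ inductively, using the generalised Fiedler construction of Theorem~\ref{thm:genFiedler} (in the specific form of Lemma~\ref{D0}) or using Lemma~\ref{HS04} together with Remark~\ref{rem:eigenvectors} to splice additional pieces in, arranging each step so that a doubled $\theta$ with zero-free orthogonal eigenvectors persists and only $\lambda$ and $\mu$ are introduced as other eigenvalues.

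The principal obstacle is the combinatorial accounting. The multiplicities of $\lambda$ and $\mu$ inside the $B_j$'s, together with the $\lambda/\mu$-split of the $r$ isolated diagonal entries, must be chosen so that the total $\lambda$-multiplicity equals the prescribed $2+n_1$ for every admissible $n_1\in\{0,\ldots,n-4\}$. This requires a case analysis on the piece sizes $p_0,p_i,q_i$ and on $r$, showing that the achievable $\lambda$-multiplicities always cover the full range $\{2,\ldots,n-2\}$. The hypothesis $(p_0,k)\ne(1,1)$ enters here a second time, since in the excluded configuration the conclusion genuinely fails: e.g.\ when $p_0=p_1=q_1=1$ and $r=0$ the graph $G_0=K_1\vee(K_1\cup K_1)$ is the path $P_3$, for which $q(P_3)=3$ and so no two-eigenvalue matrix exists.
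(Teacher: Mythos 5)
Your overall architecture (peel off the $r$ isolated vertices as diagonal entries, write $G_0=K_{p_0}\vee(K_{p_1}\cup K_{q_1})\vee\cdots\vee(K_{p_k}\cup K_{q_k})$, and glue with Corollary~\ref{cor:one_step}) is close in spirit to the paper's, but your single top-level application of Corollary~\ref{cor:one_step} has a genuine gap: it is not enough that neither side of the bipartition of pieces be a single vertex. Take $p_0=1$, $k=2$, $p_1=q_1=p_2=q_2=1$, $r=0$, so $G=(K_{1,0}\cup K_{1,1}\cup K_{1,1})^c$ on five vertices, a case the theorem does cover. The three pieces are $K_1$, $K_1\cup K_1$, $K_1\cup K_1$, and every bipartition of them produces a side equal either to the singleton $K_1$ or to $K_1\vee(K_1\cup K_1)$, which is the path on three vertices; every matrix in $S$ of a path is permutation-similar to an irreducible tridiagonal matrix and so has only simple eigenvalues. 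Hence neither kind of side can carry the doubled eigenvalue with two orthogonal zero-free eigenvectors that Corollary~\ref{cor:one_step} demands, and your criterion ``no singleton side'' fails to detect this. More generally, whenever $p_0=1$ the piece $K_{p_0}$ must be merged with some $K_{p_i}\cup K_{q_i}$, and the resulting side $(K_{1,0}\cup K_{p_i,q_i})^c$ has $q\geq 3$ and needs a separate treatment. This is exactly why the paper's Lemma~\ref{1011} is seeded with an explicitly computed $5\times 5$ matrix (and a $7\times 7$ one for the multiplicity pair $(3,4)$) rather than with a join decomposition.

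A second, smaller shortfall is that you propose to build each half directly on joins of pieces of arbitrary size and then defer the multiplicity bookkeeping to ``a case analysis,'' whereas the paper only ever constructs matrices for the minimal configurations (all $p_i=q_i=1$ and small $p_0$, in Lemmas~\ref{ex:K20UK11}, \ref{11} and \ref{1011}) and then inflates the clique sizes with Lemma~\ref{thm:Helena}, i.e.\ with the gluing Lemma~\ref{HS04} through a diagonal entry not equal to $0$ or $1$; that inflation step is what absorbs all of the accounting you leave open. Without it you would also need to justify, for a piece $K_p$ with $p\geq 2$, that a multiplicity-two eigenvalue admits two \emph{orthogonal} zero-free eigenvectors: Theorem~\ref{thm:Knall} only supplies one eigenvector with a prescribed pattern (this is repairable by a genericity argument in the two-dimensional eigenspace, but it must be said). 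Your treatment of a $K_p\cup K_q$ piece via $(u,\pm v)/\sqrt{2}$, and your identification of the excluded case $(p_0,k)=(1,1)$ with the obstruction $q\geq 3$, are both correct.
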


\begin{corollary}
 If  $G$ is a graph on $n$ vertices such that $\mr_+(G)=2$ and $G$ is not of the form
$\left( (K_{1,0} \cup K_{p,q})\vee K_r \right)^c$, then $\Mm(G)= \left\lfloor \frac{n}{2} \right\rfloor$.
\end{corollary}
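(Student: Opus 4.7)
The plan is to deduce the corollary as a fairly direct application of Theorem \ref{thm:mr+2}, after reconciling the shape of $G^{c}$ given by Theorem \ref{thm:BvHL} with the form (\ref{n-2form}) required by that theorem, and then choosing the most balanced admissible pair $(n_1,n_2)$.

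First I would translate the hypothesis into the exact shape needed. Since $\mr_+(G)=2\leq 2$, Theorem \ref{thm:BvHL} guarantees that $G^{c}$ can be written as $(K_{p_1',q_1'}\cup\cdots\cup K_{p_s',q_s'})\vee K_r$ with $p_i'+q_i'>0$. I would then sort the pieces $K_{p_i',q_i'}$ into two classes: those that are edgeless (one of $p_i',q_i'$ equals zero), which I would collect into a single $K_{p_0,0}$, and those with both parts positive, which I would relabel $K_{p_1,q_1},\ldots,K_{p_k,q_k}$. This puts $G^{c}$ in the form (\ref{n-2form}). The family of excluded graphs $((K_{1,0}\cup K_{p,q})\vee K_r)^{c}$ corresponds exactly to $(p_0,k)=(1,1)$; thus excluding that family is equivalent to assuming $(p_0,k)\neq(1,1)$, and Theorem \ref{thm:mr+2} applies.

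Next I would choose $n_1,n_2\geq 0$ with $n_1+n_2=n-4$ as balanced as possible: for even $n$ take $n_1=n_2=(n-4)/2$, and for odd $n$ take $n_1=(n-5)/2$, $n_2=(n-3)/2$. Theorem \ref{thm:mr+2} then produces $A\in S(G)$ with exactly two eigenvalues of multiplicities $2+n_1$ and $2+n_2$, so $\Mm(A)=\lfloor n/2\rfloor$ and hence $\Mm(G)\geq \lfloor n/2\rfloor$. The cases $n\leq 3$ are vacuous, because for $n\leq 2$ every graph has $\mr_+\leq 1$, while for $n=3$ the only graph with $\mr_+=2$ is $P_3$, whose complement $K_1\cup K_2$ is of the excluded form $(K_{1,0}\cup K_{1,1})\vee K_0$.

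The reverse inequality $\Mm(G)\leq \lfloor n/2\rfloor$ is the standard observation recorded just after the definition of $\Mm(G)$ in the introduction: since $G$ has edges, no $A\in S(G)$ is a scalar matrix, so $A$ has at least two distinct eigenvalues and its smaller multiplicity is at most $\lfloor n/2\rfloor$. The only non-routine step is the bookkeeping in the first paragraph, namely verifying that the single excluded family in the corollary coincides exactly with the single excluded case $(p_0,k)=(1,1)$ in Theorem \ref{thm:mr+2}; once that matchup is made, the rest of the proof is immediate.
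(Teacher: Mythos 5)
Your proposal is correct and follows exactly the route the paper intends: the corollary is stated there without proof as an immediate consequence of Theorem \ref{thm:mr+2}, obtained by rewriting the Barrett--van der Holst--Loewy form of $G^{c}$ into the shape \eqref{n-2form}, matching the excluded family with the case $(p_0,k)=(1,1)$, and taking the balanced choice of $(n_1,n_2)$ together with the trivial upper bound $\Mm(G)\leq\lfloor n/2\rfloor$. Your explicit handling of the small cases $n\leq 3$ is a welcome detail the paper leaves implicit.
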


Note that in the case $k=r=0$, we  have $G=(K_{p_0,0})^c=K_{p_0}$ and in the case $p_0=r=0$, $k=1$,  we have
$G=(K_{p,q})^c=K_{p}\cup K_q$.  Matrices in $S(K_n)$ will be the building blocks in our construction. It will prove 
useful that  result from \cite{MR3208410} considers not only the eigenvalues of these matrices, but also says 
something about the pattern of some of the associated eigenvectors. 

\begin{theorem}[\cite{MR3208410}]\label{thm:Knall}
For any given list of real numbers $\sigma=(\lambda_1,\lambda_2,\ldots,\lambda_n)$, $\lambda_1 \neq \lambda_2$, there exists $A_n \in S(K_n)$ with the spectrum $\sigma$.

Furthermore, given any zero-nonzero pattern of a vector in $\R^n$ that contains at least two nonzero elements, $A_n$ can be chosen in such a way that there exist an eigenvector corresponding to $\lambda_1$ with the given pattern.
\end{theorem}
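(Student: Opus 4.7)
The plan is to realize $A_n$ in diagonalized form $A_n = \lambda_1 uu^T + \sum_{k=2}^n \lambda_k u_k u_k^T$, where $\{u=u_1, u_2, \ldots, u_n\}$ is an orthonormal basis of $\R^n$. Choosing $u$ to be a unit vector supported exactly on the prescribed nonzero positions makes $A_n$ automatically symmetric with spectrum $\sigma$ and with $u$ an eigenvector for $\lambda_1$; the nontrivial task is to choose the orthonormal completion $(u_2, \ldots, u_n)$ of $\{u\}$ so that every off-diagonal entry of $A_n$ is nonzero, i.e.\ so that $A_n \in S(K_n)$.

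Fix such a $u$ (with the freedom to adjust the magnitudes of the nonzero entries subject to $\|u\|=1$). The orthonormal completions of $\{u\}$ form the orthogonal group $O(u^\perp) \cong O(n-1)$, a real algebraic manifold. For each pair $i<j$, the entry $(A_n)_{ij} = \lambda_1 u_i u_j + \sum_{k\geq 2}\lambda_k (u_k)_i (u_k)_j$ is a polynomial function on this parameter space, so its zero locus $Y_{ij}$ is a closed algebraic subvariety. My plan is to show that each $Y_{ij}$ is a \emph{proper} subvariety; then the finite union $\bigcup_{i<j}Y_{ij}$ has nonempty Zariski-open complement, and any completion outside this union yields a valid $A_n$.

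To prove each $Y_{ij}$ is proper I would exhibit at least one completion making $(A_n)_{ij}\neq 0$. Using the orthogonality identity $\sum_{k=1}^n (u_k)_i (u_k)_j = 0$ valid for $i\neq j$, one can rewrite
\[
(A_n)_{ij} = (\lambda_1-\mu)\,u_i u_j + \sum_{k=2}^n (\lambda_k-\mu)\,(u_k)_i (u_k)_j
\]
for any constant $\mu$. When $u_i u_j \neq 0$, a convenient choice of $\mu$ (for instance, a common value of $\{\lambda_k\}_{k\geq 2}$ when one exists) either collapses the expression to a nonzero multiple of $u_i u_j$ or leaves a non-constant polynomial whose zero set is a proper subvariety. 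When $u_i u_j = 0$, one of $e_i, e_j$ lies in $u^\perp$, so I would select $u_2$ to have nonzero $i$-th and $j$-th coordinates and complete the basis generically; the resulting term $(\lambda_2-\mu)(u_2)_i (u_2)_j$ then forces $(A_n)_{ij}\neq 0$, provided at least two of $\lambda_2,\ldots,\lambda_n$ differ.

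The hard part is controlling the degenerate multiplicity patterns cleanly and verifying proper-ness for every pair $(i,j)$ without inadvertent cancellation, since when $u$ has a zero coordinate and the remaining spectrum is highly degenerate, the expression above risks vanishing identically. Extra room can be drawn from the sphere of unit vectors with the prescribed support (the entries of $u$ themselves are free parameters) and, if needed, from an inductive construction using the bordering Lemma~\ref{HS04} together with Fiedler's Theorem~\ref{thm:Fiedler} to assemble $A_n$ from smaller blocks with more favourable multiplicity structure. Once each $Y_{ij}$ is established as a proper subvariety, the finite-union argument on the irreducible manifold $O(u^\perp)$ produces the required orthonormal completion and thereby the desired matrix $A_n$.
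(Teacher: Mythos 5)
The paper itself gives no proof of Theorem~\ref{thm:Knall}: it is imported verbatim from \cite{MR3208410}, so your argument can only be judged on its own terms. Doing so, the step you defer as ``the hard part'' is a genuine gap, and in the fully degenerate case it cannot be closed at all, because the ``furthermore'' clause as quoted fails there. Concretely, take $\sigma=(\lambda_1,\mu,\mu,\ldots,\mu)$ with $\lambda_1\neq\mu$ and a pattern having a zero in position $i$ (e.g.\ $n=3$, $\sigma=(0,1,1)$, pattern $(\ast,\ast,0)$). Any symmetric $A$ with this spectrum whose $\lambda_1$-eigenvector is the unit vector $u$ must equal $\mu I+(\lambda_1-\mu)uu^T$, since the $\mu$-eigenspace is forced to be $u^\perp$; hence $a_{ij}=(\lambda_1-\mu)u_iu_j=0$ for every $j$, and $A\notin S(K_n)$. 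In your language, $Y_{ij}$ is then \emph{all} of $O(u^\perp)$, and neither genericity nor re-weighting the nonzero entries of $u$ can help. So the statement needs either the restriction to the all-nonzero pattern --- which is the only case this paper ever invokes, e.g.\ in Lemma~\ref{thm:Helena} and the proposition on $(K_{1,0}\cup K_{p,q})^c$ --- or a hypothesis excluding $\lambda_2=\cdots=\lambda_n$ when the pattern has zeros. Your proposal correctly senses this danger but leaves it unresolved, which is precisely where the proof (or a corrected statement) has to do its work.

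Even away from that degenerate case, the Zariski argument is only as good as the witness you produce for each pair $(i,j)$, and the sketch stops short of producing one. The identity $\sum_{k=1}^n(u_k)_i(u_k)_j=\delta_{ij}$ does settle the all-nonzero pattern when $\lambda_2=\cdots=\lambda_n$ (then $a_{ij}=(\lambda_1-\lambda_2)u_iu_j\neq0$ identically), but in the mixed cases you still must exhibit, for each pair, one orthonormal completion with $a_{ij}\neq 0$; when $u_i=0$ the vectors $e_i-u_iu$ and $e_j-u_ju$ can even be parallel inside $u^\perp$, so ``choose $u_2$ with nonzero $i$-th and $j$-th coordinates'' does not by itself guarantee a nonzero contribution. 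Your proposed fallback --- assembling $A_n$ inductively from smaller blocks via Lemma~\ref{HS04} and Theorem~\ref{thm:Fiedler} --- is a legitimate repair, but at that point the argument is no longer a self-contained genericity proof: it is the bordering machinery that Section~\ref{Sec Const} develops and that \cite{MR3208410} relies on. In short: restrict the claim to the all-nonzero pattern (or add the multiplicity hypothesis), and then either complete the witness construction for every pair or switch wholesale to the inductive bordering route.
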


\begin{corollary}
For any integer $n\geq 2$ we have $\Mm(K_n)= \left\lfloor \frac{n}{2} \right\rfloor$.
\end{corollary}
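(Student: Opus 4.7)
The plan is to prove the equality by establishing matching upper and lower bounds, with both bounds being essentially immediate consequences of material already developed in the excerpt.

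For the upper bound, I would invoke the inequality $\Mm(G) \leq \lfloor |G|/q(G) \rfloor$ from \eqref{eqn:basic}. To apply this I need $q(K_n) = 2$ for $n \geq 2$. This is clear: any $A \in S(K_n)$ must have at least two distinct eigenvalues, since otherwise $A$ would be a scalar matrix and would then have no nonzero off-diagonal entries, contradicting $n \geq 2$; on the other hand, the all-ones matrix $J$ is in $S(K_n)$ and has exactly two distinct eigenvalues ($n$ with multiplicity $1$ and $0$ with multiplicity $n-1$). Hence $q(K_n) = 2$ and therefore $\Mm(K_n) \leq \lfloor n/2 \rfloor$.

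For the lower bound, I would directly apply Theorem \ref{thm:Knall}. Given any two distinct real numbers $\lambda_1 \neq \lambda_2$, that theorem produces a matrix $A \in S(K_n)$ with any prescribed spectrum whose first two entries differ. So I simply choose the spectrum
\[
\sigma = (\underbrace{\lambda_1,\ldots,\lambda_1}_{\lceil n/2\rceil},\underbrace{\lambda_2,\ldots,\lambda_2}_{\lfloor n/2 \rfloor}),
\]
which has $\lambda_1 \neq \lambda_2$ as its first two entries. The resulting matrix has minimum eigenvalue multiplicity exactly $\lfloor n/2 \rfloor$, so $\Mm(K_n) \geq \lfloor n/2 \rfloor$.

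Combining the two bounds yields $\Mm(K_n) = \lfloor n/2 \rfloor$. There is no real obstacle here: the nontrivial work was already done in Theorem \ref{thm:Knall} (which guarantees realizability of prescribed spectra on $K_n$) and in the general inequality \eqref{eqn:basic}. The only small verification needed along the way is the value $q(K_n) = 2$, which is routine as sketched above.
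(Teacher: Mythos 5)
Your proof is correct and takes essentially the same route as the paper, which leaves this corollary as an immediate consequence of Theorem \ref{thm:Knall} (for the lower bound) together with the trivial bound $\Mm(G)\le\lfloor n/2\rfloor$ already noted in the introduction; your detour through $q(K_n)=2$ and \eqref{eqn:basic} proves the same upper bound. One cosmetic slip: with the ordering you wrote, the first two entries of $\sigma$ are both $\lambda_1$ once $n\ge 3$, so to invoke Theorem \ref{thm:Knall} literally you should reorder the (unordered) list as $(\lambda_1,\lambda_2,\lambda_1,\ldots)$.
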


While Theorem \ref{thm:Knall} solves the inverse eigenvalue problem for complete graphs, we note that in order for a matrix $A \in S(K_p \cup K_q)$ to have an eigenvector corresponding to an eigenvalue $\lambda$ that has no zero entries, this eigenvalue will need to be a repeated eigenvalue in $A$. 

%

We will use inductive arguments to gradually construct matrices with the desired pattern and eigenvalues. 
First we present three lemmas to resolve the cases that will be used as the basis of our construction.

\begin{lemma}\label{ex:K20UK11}
There exists a matrix 
in $S((K_{2,0} \cup K_{1,1})^c)$ with two distinct eigenvalues both with multiplicity $2$, and 
with an orthogonal basis of eigenvectors that do not contain any zero elements. 
\end{lemma}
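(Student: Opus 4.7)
The plan is to exhibit an explicit $4\times 4$ matrix and verify the claim by direct calculation.

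Note first that $K_{2,0}\cup K_{1,1}$ has $4$ vertices and exactly one edge, so (labelling the edge as $\{3,4\}$) the complement is $K_4$ minus the edge $\{3,4\}$. Any $A\in S((K_{2,0}\cup K_{1,1})^c)$ satisfying $A^2=rI_4$ and $\mathrm{tr}(A)=0$ has exactly two distinct eigenvalues $\pm\sqrt r$, each of multiplicity $2$. A short search among matrices with entries in $\{0,\pm1\}$ yields
$$A=\npmatrix{0 & 1 & 1 & 1 \\ 1 & 0 & 1 & -1 \\ 1 & 1 & -1 & 0 \\ 1 & -1 & 0 & 1},$$
for which $A^2=3I_4$ by direct computation, so the spectrum of $A$ is $\{\sqrt 3,\sqrt 3,-\sqrt 3,-\sqrt 3\}$.

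For each eigenvalue $\lambda=\pm\sqrt 3$, rows $3$ and $4$ of $(A-\lambda I_4)v=0$ solve for $v_3,v_4$ as explicit linear functions of $(v_1,v_2)$, and the remaining equations are then automatically satisfied, parametrizing each eigenspace by $(v_1,v_2)\in\R^2$. An eigenvector has a zero entry exactly when $(v_1,v_2)$ lies on one of the four lines $v_1=0$, $v_2=0$, $v_1+v_2=0$, $v_1-v_2=0$. The Euclidean inner product of two vectors in one eigenspace is a symmetric bilinear form in the parameter pairs, so orthogonality with a fixed first vector is a single linear constraint on the second. I would pick $(2,1)$ as the first parameter (off every bad line) and solve the orthogonality equation, then verify that a specific companion---$(1,3\sqrt 3+4)$ for $+\sqrt 3$ and $(1,4-3\sqrt 3)$ for $-\sqrt 3$---also avoids all four bad lines. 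Eigenvectors corresponding to distinct eigenvalues of a symmetric matrix are automatically orthogonal, so the four resulting vectors form an orthogonal basis of $\R^4$ with no zero entries.

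The delicate point is that both eigenvectors within one eigenspace must simultaneously have all nonzero entries; naively applying Gram--Schmidt to two `good' vectors could easily introduce a zero in the second. The parameter-plane argument sidesteps this because the bad locus is a proper finite union of lines while the orthogonality condition is only one further linear equation, so infinitely many valid second parameters remain.
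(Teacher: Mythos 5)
Your proposal is correct and takes essentially the same approach as the paper: both proofs exhibit an explicit $4\times 4$ matrix in $S(K_4 - e)$ and verify the spectral and eigenvector conditions directly. Your example is in fact cleaner than the paper's (a $\{0,\pm 1\}$ matrix with $A^2=3I_4$ and trace $0$), and your parameter-plane argument for choosing two mutually orthogonal nowhere-zero eigenvectors in each eigenspace checks out: the orthogonality constraint is a line through the origin in $(v_1,v_2)$-space, and the representatives $(2,1)$ and $(1,4\pm 3\sqrt 3)$ you name do avoid the four bad lines.
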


\begin{proof}
The matrix 
{\small $$A=\left(
\begin{array}{cccc}
 \frac{2}{2+3 \sqrt{3}} & \frac{1}{23} \left(9-2 \sqrt{3}\right) & \frac{1}{23} \sqrt{21+26 \sqrt{3}} & \frac{1}{23}
   \sqrt[4]{3} \left(-11+5 \sqrt{3}\right) \\
 \frac{1}{23} \left(9-2 \sqrt{3}\right) & \frac{2}{2+3 \sqrt{3}} & \frac{1}{23} \sqrt{21+26 \sqrt{3}} & \frac{1}{23}
   \sqrt{-330+196 \sqrt{3}} \\
 \frac{1}{23} \sqrt{21+26 \sqrt{3}} & \frac{1}{23} \sqrt{21+26 \sqrt{3}} & -\frac{2}{23} \left(-9+2 \sqrt{3}\right)
   & 0 \\
 \frac{1}{23} \sqrt[4]{3} \left(-11+5 \sqrt{3}\right) & \frac{1}{23} \sqrt{-330+196 \sqrt{3}} & 0 & -\frac{4}{23}
   \left(-9+2 \sqrt{3}\right) \\
\end{array}
\right)$$}
has the desired properties. 
\end{proof}

\begin{lemma}\label{11}
 Let $G$ be a graph on $n \geq 4$ vertices, such that $$G^c=K_{1,1}\cup\ldots\cup K_{1,1}.$$
 Then for all nonnegative integers $n_0,n_1$ satisfying $4+n_0+n_1=n$ there exists a matrix $A \in S(G)$ that has two distinct 
 eigenvalues with multiplicities $2+n_0$ and $2+n_1$.
\end{lemma}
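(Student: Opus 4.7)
The graph $G$ here is the cocktail-party graph $K_{k \times 2}$ (with $k = n/2 \geq 2$), and the strategy is induction on $k$ using Corollary \ref{cor:one_step}. The obstacle to a naive induction is that the corollary always outputs two pairs of shifted eigenvalues $(\lambda_1+t)^2, (\lambda_2-t)^2$, so combining a two-distinct-eigenvalue matrix on $2(k-1)$ vertices with a trivial $2 \times 2$ piece $cI_2$ cannot directly yield a two-distinct-eigenvalue matrix on $2k$ vertices. I therefore prove the following stronger auxiliary statement by induction on $k$: for every $k \geq 1$, every triple of distinct reals $\alpha, \lambda, \mu$, and every $a, b \geq 0$ with $a + b = 2k - 2$, there exists $A \in S(K_{k \times 2})$ with spectrum $\{\alpha^2, \lambda^a, \mu^b\}$ admitting two orthogonal eigenvectors for $\alpha$ without zero entries. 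The lemma itself follows in one additional application of Corollary \ref{cor:one_step}: with $B_1 = \alpha I_2 \in S(\overline{K_2})$ (taking $\alpha = (\lambda+\mu)/2$ and the orthogonal eigenvectors $(1, \pm 1)^T/\sqrt{2}$) and $B_2 \in S(K_{(k-1) \times 2})$ from the stronger statement applied with $(a, b) = (n_0, n_1)$, setting $\lambda_1 = \lambda_2 = \alpha$ and $t = (\lambda - \mu)/2 > 0$ (WLOG $\lambda > \mu$), the hypothesis $\lambda_1 > \lambda_2 - t$ reduces to the automatic $t > 0$, and the shifts $\alpha \pm t$ equal $\lambda$ and $\mu$, yielding $C \in S(K_{k \times 2})$ of spectrum $\{\lambda^{2+n_0}, \mu^{2+n_1}\}$.

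The inductive step of the stronger statement uses the same scheme internally. Given $(a, b)$ with $a + b = 2k - 2$, if $a \geq 2$ I take $\alpha' = (\alpha + \lambda)/2$, apply Corollary \ref{cor:one_step} to $B_1 = \alpha' I_2$ and $B_2 \in S(K_{(k-1) \times 2})$ (from the inductive hypothesis) of spectrum $\{(\alpha')^2, \lambda^{a-2}, \mu^b\}$, with $\lambda_1 = \lambda_2 = \alpha'$ and $t = |\alpha - \lambda|/2$; the shifts $\alpha' \pm t$ are exactly $\alpha$ and $\lambda$, producing spectrum $\{\alpha^2, \lambda^a, \mu^b\}$. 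When $b \geq 2$, the analogous reduction with $\alpha'' = (\alpha + \mu)/2$ decreases $b$ by $2$. For $k \geq 3$ one always has $\max(a, b) \geq 2$, so at least one reduction applies; the two-orthogonal-eigenvectors property for the new repeated eigenvalue $\alpha$ is preserved by the eigenvector conclusion of Corollary \ref{cor:one_step} for $\lambda_1 + t$. The base case $k = 1$ is trivially $A = \alpha I_2$; for $k = 2$ the subcases $(a, b) = (2, 0)$ and $(0, 2)$ reduce one more step down to $k = 1$, leaving only $(a, b) = (1, 1)$ at $k = 2$ as an irreducible base case.

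The main obstacle is this irreducible base case: exhibiting $A \in S(C_4)$ with three distinct eigenvalues $\{\alpha^2, \lambda, \mu\}$ and two orthogonal $\alpha$-eigenvectors without zeros. I would write $A - \alpha I = (\lambda - \alpha) vv^T + (\mu - \alpha) ww^T$ for orthonormal $v, w \in \R^4$, turning the zero-pattern constraints $A_{12} = A_{34} = 0$ into two bilinear equations $(\lambda - \alpha) v_1 v_2 + (\mu - \alpha) w_1 w_2 = 0$ and $(\lambda - \alpha) v_3 v_4 + (\mu - \alpha) w_3 w_4 = 0$ on the $5$-dimensional Stiefel manifold of orthonormal $2$-frames in $\R^4$; these cut out a positive-dimensional variety on which the open conditions that the remaining four off-diagonals of $A$ are nonzero and that $\mathrm{span}(v, w)^\perp$ admits an orthonormal basis without zero entries are generically satisfied. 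A secondary technical issue is a finite set of degenerate triples $(\alpha, \lambda, \mu)$ where $(\alpha + \lambda)/2 \in \{\lambda, \mu\}$, which would make $\alpha'$ coincide with $\lambda$ or $\mu$ and block the $a$-reduction; however, simultaneous failure of both the $a$- and $b$-reductions would force $\alpha + \lambda = 2\mu$ and $\alpha + \mu = 2\lambda$, hence $\lambda = \mu$, a contradiction, so at least one reduction is always available.
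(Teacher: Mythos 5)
Your overall strategy is the same as the paper's: build the matrix by repeatedly gluing on $2\times 2$ blocks $cI_2\in S\left((K_{1,1})^c\right)$ via Corollary \ref{cor:one_step}, tracking one ``active'' doubled eigenvalue that carries the pair of zero-free orthogonal eigenvectors. Your midpoint bookkeeping is in fact the right way to run this recursion: each application of Corollary \ref{cor:one_step} replaces the two merged doubled eigenvalues $\lambda_1,\lambda_2$ by $\lambda_1+t,\lambda_2-t$, which are roots of $(x-\lambda_1)(x-\lambda_2)=b^2>0$ and hence necessarily lie \emph{outside} the interval spanned by $\lambda_1$ and $\lambda_2$; splitting a midpoint $\alpha=(\lambda+\mu)/2$ outward into $\lambda$ and $\mu$ respects this constraint, so your reduction steps and the final assembly are sound. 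Your worry about degenerate triples is also moot in the actual application: every $\alpha$ your recursion generates is a strict midpoint and therefore always lies strictly between $\mu$ and $\lambda$, never coinciding with either.

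The genuine gap is the irreducible base case $k=2$, $(a,b)=(1,1)$: a matrix in $S(C_4)=S\left((K_{1,1}\cup K_{1,1})^c\right)$ with spectrum $(\alpha,\alpha,\lambda,\mu)$ and two orthogonal $\alpha$-eigenvectors with no zero entries. This case cannot be skipped — it is reached exactly when $n_0$ and $n_1$ are both odd — and your treatment of it is a plan, not a proof. You reduce it to two bilinear equations on the Stiefel manifold of orthonormal $2$-frames and assert that the resulting variety is positive-dimensional and that the open nondegeneracy conditions (the four off-diagonal entries $A_{13},A_{14},A_{23},A_{24}$ nonzero, and a zero-free orthonormal basis of $\mathrm{span}(v,w)^\perp$) hold generically on it; but you verify neither nonemptiness nor that the closed conditions do not force one of the open ones to fail identically on each component. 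The claim is true — for instance, when $\alpha=(\lambda+\mu)/2$ one can take $v=\tfrac12(1,1,1,1)$ and $w=\tfrac12(1,1,-1,-1)$, and the paper itself exhibits an explicit $4\times 4$ matrix of exactly this type (its ``odd case'' initial matrix $A_1$, which lies in $S\left((K_{1,1}\cup K_{1,1})^c\right)$ with spectrum $(1,0,1+(k-1)t,1+(k-1)t)$ and two zero-free orthogonal eigenvectors for the doubled eigenvalue) — but a concrete witness, or a completed version of your rank-two argument valid for every $\alpha$ strictly between $\mu$ and $\lambda$, is what your write-up still owes. With that base case supplied, the rest of your argument goes through.
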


\begin{proof}
We are looking for an $n \times n$ matrix $$B \in S\left( (K_{1, 1} \cup \ldots \cup K_{1, 1})^c\right)$$ with eigenvalues $0$ and $1$ with multiplicities $m_0 \geq 2$ and $m_1 \geq 2$, respectively. We will prove this claim by induction.

 In the first step we distinguish between the cases when $m_0$ and $m_1$ are both even and when $m_0$ and $m_1$ are both odd. In the first case let us write $m_0=2 k_0$ and $m_1=2 k_1$ and in the second case let us write $m_0=2 k_0+1$ and $m_1=2 k_1+1.$ Let $k=k_0+k_1.$
In the even case let  
$$A_1=\npmatrix{1+(k-1)t & 0 \\ 0 & 1+(k-1)t} \in S( (K_{1,1})^c)$$ 
and in the odd case let 
$$A_1=\left(
\begin{array}{cccc}
 \frac{2 a}{3}+1 & 0 & -\frac{a}{3} & \frac{1}{3} \sqrt{a^2+\frac{3 a}{2}} \\
 0 & \frac{2 a}{3}+1 & -\frac{a}{3} & -\frac{1}{3} \sqrt{a^2+\frac{3 a}{2}} \\
 -\frac{a}{3} & -\frac{a}{3} & \frac{a+3}{3} & 0 \\
 \frac{1}{3} \sqrt{a^2+\frac{3 a}{2}} & -\frac{1}{3} \sqrt{a^2+\frac{3 a}{2}} & 0 & \frac{a}{3}
   \\
\end{array}
\right) \in S\left((K_{1,1}\cup K_{1,1})^c\right),$$
where  $a=(k-1) t$ and the matrix $A_1$ has 
the spectrum $(1,0,1+(k-1)t,1+(k-1)t).$ It is easy to check that in both cases we can find two orthogonal eigenvectors with no zero entries corresponding to $1+(k-1)t.$

Starting with $A_1$ we will apply Corollary \ref{cor:one_step} in two ways.
For $j=2,\ldots, k_1$ we take $$A_j=\npmatrix{1-t & 0 \\ 0 & 1-t} \in  S( (K_{1,1})^c) \subseteq M_2(\R)$$ and define $C_1=A_1$.  By Corollary \ref{cor:one_step}, we recursively construct matrices $$C_j=\npmatrix{C_{j-1} & S_j \\ S_j^T & A_j} \in S\left( (K_{1, 1} \cup \ldots \cup K_{1, 1})^c\right)\subseteq M_{2j}(\R),$$
where in the even case $C_j$ has eigenvalues  $1+(k-j)t$ with multiplicity $2$ and $1$ with multiplicity $2(j-1)$, and in the odd case $C_j$ has eigenvalues  $1+(k-j)t$ with multiplicity $2$, $1$ with multiplicity $2(j-1)+1$, and $0$ with multiplicity $1$. 
In addition, the construction guarantees that $C_{j}$ has two eigenvectors with no zero elements corresponding to $1+(k-j)t.$
 
 For $j=1,\ldots, k_0$ we take $$A_{k_1+j}=\npmatrix{-t & 0 \\ 0 & -t} \in   S( (K_{1,1})^c) \subseteq M_2(\R).$$ Using  Corollary \ref{cor:one_step} we
recursively define $$C_{k_1+j}=\npmatrix{C_{k_1+j-1} & S_{k_1+j} \\ S_{k_1+j}^T & A_{k_1+j}} \in S\left( (K_{1, 1} \cup \ldots \cup K_{1,1})^c\right)\subseteq M_{2(k_1+j)}(\R)$$ 
such that in the even case $C_{k_1+j}$ has eigenvalues
$1+(k_0-j)t$ with multiplicity $2$, $1$ with multiplicity $2(k_1-1)$ and  $0$ with multiplicity $2j$, and 
in the odd case $C_{k_1+j}$ has eigenvalues
$1+(k_0-j)t$ with multiplicity $2$, $1$ with multiplicity $2(k_1-1)+1$ and  $0$ with multiplicity $2j+1$.

 In this way we  obtain a matrix $C_{k}=C_{k_1+k_0} \in S(K_{1,1} \cup \ldots \cup K_{1, 1})^c$ with eigenvalues $0$ repeated $2k_0$ times and $1$ repeated  $2 k_1$ times in the even case, and  with eigenvalues $0$ repeated $2k_0+1$ times and $1$ repeated  $2 k_1+1$ times in the odd case. 
\end{proof}

\begin{remark}\label{rem:Lema extension}
Note that we can, using the same construction as in the proof above, construct a matrix $A\in S\left((K_{1,1}\cup\ldots\cup K_{1,1})^c\right)$ with eigenvalues $1+t$, $t>0$, with multiplicity 2,
$0$ with multiplicity $n_0$ and $1$ with multiplicity $n_1$ and two orthogonal eigenvectors corresponding to $1+t$ that do not contain any zero elements. The only difference that we need to make in the proof is for the initial matrix $A_1$ to have eigenvalues $(1+k t,1+k t)$ in the even case and eigenvalues  $(1+k t, 1+k t,1,0)$ in the odd case. 
\end{remark}

\begin{lemma}\label{1011}
 Let $G$ be a graph on $n\geq 5$ vertices such that  
 $$G^c=K_{1,0} \cup K_{1,1}\cup\ldots\cup K_{1,1}.$$
 Then for all nonnegative integers $n_0,n_1$ such that   $4+n_0+n_1=n$ there exists a matrix $A \in S(G)$ that has two distinct 
 eigenvalues with multiplicities $2+n_0$ and $2+n_1$.

\end{lemma}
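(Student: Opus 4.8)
The plan is to follow the scheme of the proof of Lemma \ref{11}, replacing its two-vertex (even) or four-vertex (odd) seed by a five-vertex seed that absorbs the extra vertex coming from the $K_{1,0}$ summand. Writing $G^c=K_{1,0}\cup K_{1,1}\cup\ldots\cup K_{1,1}$ with $\ell\ge 2$ copies of $K_{1,1}$ (so that $n=2\ell+1$ is odd), one observes that $G$ is the complete multipartite graph with one part of size $1$ and $\ell$ parts of size $2$, and that adjoining a further $K_{1,1}$ to $G^c$ corresponds, at the level of matrices, to appending a block $cI_2\in S\!\left((K_{1,1})^c\right)$ via Corollary \ref{cor:one_step} (the off-diagonal block $S$ produced there has no zero entries, so the new pair becomes a new part adjacent to everything and mutually nonadjacent). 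Thus, once a suitable seed on five vertices is available, the whole family is reached by repeatedly appending pairs exactly as in Lemma \ref{11}.

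Concretely, I would first produce a matrix $A_1\in S\!\left((K_{1,0}\cup K_{1,1}\cup K_{1,1})^c\right)$ on five vertices possessing a repeated ``moving'' eigenvalue of multiplicity $2$ that carries two orthogonal eigenvectors without zero entries, together with three further eigenvalues placed at the two eventual target values $0$ and $1$; as in Lemma \ref{11} two flavours of seed are needed, with the three extra eigenvalues distributed as $(0,0,1)$ or as $(0,1,1)$, to fix the parities of the two target multiplicities. Setting $C_1=A_1$ and invoking Corollary \ref{cor:one_step} repeatedly, I would then deposit multiplicity at one target while sliding the moving eigenvalue down by $t$, and afterwards deposit at the other target, precisely as in the two phases of Lemma \ref{11}. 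Corollary \ref{cor:one_step} guarantees that the moving eigenvalue retains two nowhere-zero orthogonal eigenvectors at each stage, so its hypotheses persist; at the final step the moving eigenvalue merges into a target, leaving a matrix in $S(G)$ with exactly the two eigenvalues $0$ and $1$ and the prescribed multiplicities $2+n_0$ and $2+n_1$. The bookkeeping of how many pairs are deposited at each target, combined with the two seed flavours, yields every admissible split.

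The main obstacle is the five-vertex seed itself. A secular-equation analysis of a matrix $\npmatrix{d & f^T \\ f & B}\in S(G)$ in which the $K_{1,0}$-vertex is the dominating one (so $f$ has no zero entry) shows that having only two distinct eigenvalues forces the submatrix $B\in S\!\left((K_{1,1}\cup\ldots\cup K_{1,1})^c\right)$ to possess a \emph{simple} eigenvalue, strictly between the two targets, whose eigenvector has no zero entry; the tools of Section \ref{Sec Const} naturally produce nowhere-zero eigenvectors only on repeated, extreme eigenvalues (see Corollary \ref{cor:one_step} and Remark \ref{rem:Lema extension}), so this delicate simple-interior requirement cannot be met by them directly and is exactly what the extra vertex of the seed must encode. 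I therefore expect the hard part to be exhibiting such a seed explicitly, as an analogue for $K_{1,2,2}$ of the explicit matrices in Lemma \ref{ex:K20UK11} and in the odd case of Lemma \ref{11}, most conveniently through Theorem \ref{thm:genFiedler} and Lemma \ref{D0}. This is also where the hypothesis $n\ge 5$ enters: for $\ell=1$ the graph $(K_{1,0}\cup K_{1,1})^c=P_3$ is a path, whose matrices have only simple eigenvalues, so no seed with a repeated moving eigenvalue exists.
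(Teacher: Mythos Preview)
Your plan shares the paper's template---a five-vertex seed combined via Corollary~\ref{cor:one_step}---but differs from the paper's execution in two linked respects. The paper's seed is the explicit matrix~\eqref{s=2} with only \emph{two} distinct eigenvalues; after an affine shift it has spectrum $(-t,-t,1,1,1)$ with two nowhere-zero eigenvectors at $-t$, and then a \emph{single} application of Corollary~\ref{cor:one_step} glues it to an $(n-5)$-vertex matrix supplied ready-made by Remark~\ref{rem:Lema extension} (which already carries both targets $0$ and $1$). That one-shot gluing produces multiplicities $(2+\hat n_0,\,5+\hat n_1)$, so one multiplicity is forced to be at least $5$; the missing split $(3,4)$ at $n=7$ is then patched by a second explicit $7\times 7$ matrix. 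Your scheme instead asks for seeds with \emph{three} distinct eigenvalues, $(\mu,\mu,0,0,1)$ and $(\mu,\mu,0,1,1)$, and appends $2\times2$ blocks iteratively as in Lemma~\ref{11}; if such seeds exist your argument is uniform and needs no $n=7$ patch. The trade-off is genuine: the paper's seed is literally the $q=2$ base case for $K_{1,2,2}$ and falls out of the problem, whereas your seed carries an extra distinct eigenvalue and must be produced as a parametrised family (the value of $\mu$ depends on the eventual $n$, just as the seed in Lemma~\ref{11} depends on $k$)---you correctly flag this as the main obstacle but do not resolve it, and the tools of Theorem~\ref{thm:genFiedler}/Lemma~\ref{D0} you point to operate on $4\times4$ blocks, so an additional idea is needed for a $5\times5$ seed. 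In short, the paper buys an easier seed at the price of one extra explicit example; you propose the reverse bargain but leave its hard step open.
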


\begin{proof}
Let $n=2s+1$. The matrix

\begin{equation}\label{s=2}
\small{\left(
\begin{array}{ccccc}
 \frac{1}{7} \left(3+\sqrt{3}\right) & 0 & \frac{\sqrt{3}}{7} & \frac{1}{7} \left(1+\sqrt{3}\right) &
   \frac{\sqrt{6}}{7 \left(3+\sqrt{3}\right)} \\
 0 & \frac{1}{7} \left(3-\sqrt{3}\right) & \frac{\sqrt{3}}{7} & \frac{1}{7} \left(1-\sqrt{3}\right) &
   -\frac{\sqrt{6} \left(2+\sqrt{3}\right)}{7 \left(3+\sqrt{3}\right)} \\
 \frac{\sqrt{3}}{7} & \frac{\sqrt{3}}{7} & \frac{3}{7} & 0 & -\frac{\sqrt{6}}{7} \\
 \frac{1}{7} \left(1+\sqrt{3}\right) & \frac{1}{7} \left(1-\sqrt{3}\right) & 0 & \frac{2}{7} & \frac{\sqrt{2}}{7}
   \\
 \frac{\sqrt{6}}{7 \left(3+\sqrt{3}\right)} & -\frac{\sqrt{6} \left(2+\sqrt{3}\right)}{7 \left(3+\sqrt{3}\right)}
   & -\frac{\sqrt{6}}{7} & \frac{\sqrt{2}}{7} & \frac{3}{7} \\
\end{array}
\right)}
\end{equation}
is contained in $ S\left((K_{1,1}\cup K_{1,1} \cup K_{1,0})^c\right)$ and has eigenvalues $1$ and $0$ with respective multiplicities $2$ and $3$, and therefore proves our theorem in the case $s=2$.

For $s \geq 3$ consider a graph $$G=(\underbrace{K_{1,1}\cup\ldots\cup K_{1,1}}_{s-2} \cup \left(K_{1,1}\cup K_{1,1}\cup K_{1,0}\right))^c,$$  and take arbitrary integers $\hat n_0, \hat n_1\geq 0$ such that $\hat n_0+\hat n_1+7=n$.
Let $B_1\in S\left((K_{1,1}\cup K_{1,1} \cup K_{1,0})^c\right)$ be a matrix with eigenvalues $-t$, $-t$, $1$, $1$, 
$1$, where $0<t <1$, and two orthogonal eigenvectors corresponding to $-t$ without any zero elements. By example \eqref{s=2} and  \cite[Lemma 2.3]{MR3118943} such a matrix exists.
Remark \ref{rem:Lema extension} gives us a matrix $B_2\in S\left((K_{1,1}\cup\ldots\cup K_{1,1})^c\right)$ on $n-5$ vertices with eigenvalues $1+t$ with multiplicity 2,
$0$ with multiplicity $\hat n_0$ and $1$ with multiplicity $\hat n_1$ and two orthogonal eigenvectors corresponding to $1+t$ that do not contain any zero elements. By Corollary  \ref{cor:one_step} there exists a matrix $C \in S(G)$ with eigenvalues $1$ with multiplicity $5+\hat n_1$ and $0$
with multiplicity $2+\hat n_0$.

We have now covered all the cases, except multiplicities $3, 4$ in the case $s=3$. This case is solved by
 the matrix
 $$
 \small{\left(
\begin{array}{ccccccc}
 \frac{4}{7} & \frac{\sqrt{5}}{7} & \frac{3}{7 \sqrt{5}} & \frac{2}{7 \sqrt{7} \left(1+\sqrt{3}\right)} & -\frac{2
   \left(2+\sqrt{3}\right)}{7 \sqrt{7} \left(1+\sqrt{3}\right)} & -\frac{2 \sqrt{\frac{3}{7}}}{7} & \frac{2}{7 \sqrt{7}}
   \\
 \frac{\sqrt{5}}{7} & \frac{5}{7} & 0 & -\frac{\sqrt{\frac{5}{7}}}{7 \left(1+\sqrt{3}\right)} & \frac{\sqrt{\frac{5}{7}}
   \left(2+\sqrt{3}\right)}{7 \left(1+\sqrt{3}\right)} & \frac{\sqrt{\frac{15}{7}}}{7} & -\frac{\sqrt{\frac{5}{7}}}{7} \\
 \frac{\sqrt{5}}{7} & 0 & \frac{1}{7} & \frac{\sqrt{\frac{5}{7}}}{7+7 \sqrt{3}} & -\frac{\sqrt{\frac{5}{7}}
   \left(2+\sqrt{3}\right)}{7 \left(1+\sqrt{3}\right)} & -\frac{\sqrt{\frac{15}{7}}}{7} & \frac{\sqrt{\frac{5}{7}}}{7} \\
 \frac{2}{7 \sqrt{7} \left(1+\sqrt{3}\right)} & -\frac{\sqrt{\frac{5}{7}}}{7 \left(1+\sqrt{3}\right)} &
   \frac{\sqrt{\frac{5}{7}}}{7+7 \sqrt{3}} & \frac{1}{7} \left(3+\sqrt{3}\right) & 0 & \frac{\sqrt{3}}{7} & \frac{1}{7}
   \left(1+\sqrt{3}\right) \\
 -\frac{2 \left(2+\sqrt{3}\right)}{7 \sqrt{7} \left(1+\sqrt{3}\right)} & \frac{\sqrt{\frac{5}{7}}
   \left(2+\sqrt{3}\right)}{7 \left(1+\sqrt{3}\right)} & -\frac{\sqrt{\frac{5}{7}} \left(2+\sqrt{3}\right)}{7
   \left(1+\sqrt{3}\right)} & 0 & \frac{1}{7} \left(3-\sqrt{3}\right) & \frac{\sqrt{3}}{7} & \frac{1}{7}
   \left(1-\sqrt{3}\right) \\
 -\frac{2 \sqrt{\frac{3}{7}}}{7} & \frac{\sqrt{\frac{15}{7}}}{7} & -\frac{\sqrt{\frac{15}{7}}}{7} & \frac{\sqrt{3}}{7} &
   \frac{\sqrt{3}}{7} & \frac{3}{7} & 0 \\
 \frac{2}{7 \sqrt{7}} & -\frac{\sqrt{\frac{5}{7}}}{7} & \frac{\sqrt{\frac{5}{7}}}{7} & \frac{1}{7} \left(1+\sqrt{3}\right)
   & \frac{1}{7} \left(1-\sqrt{3}\right) & 0 & \frac{2}{7} \\
\end{array}
\right)}
$$
 in $S\left((K_{1,0} \cup K_{1,1}\cup K_{1,1}\cup K_{1,1})^c\right)$ with eigenvalues $1$ and $0$ with respective multiplicities $3$ and $4$.
%
\end{proof}

%
%


The following technical lemma that is a straightforward application of Lemma \ref{HS04} will enable us to complete the proof of the main theorem of this section.

\begin{lemma}\label{thm:Helena}
Let $\hat{G}$ be a graph on $n$ vertices with $\hat{G}^c$ of the form
 \begin{equation*}
 K_{{p}_0,0}\cup K_{{p}_1,{q}_1}\cup K_{{p}_2,{q}_2} \cup \ldots \cup K_{{p}_k,{q}_k},
 \end{equation*}
 where $k,{p}_1,{q}_1,\ldots,{p}_k,{q}_k$ are positive integers.
Let $G$ be a graph on $n+r\geq n$ vertices, that we obtain from a graph $\hat G$ by increasing one of the parameters $p_j$, $j=0,1,\ldots,k$, by $r$, i.e. $G^c$ is of the form
 \begin{equation*}
 K_{p_0,0}\cup K_{p_1,q_1}\cup \ldots \cup K_{p_{j-1,}q_{j-1}} \cup K_{p_j+r,q_j} \cup  K_{p_{j+1},q_{j+1}}\cup\ldots \cup 
 K_{p_k,q_k}.
 \end{equation*}

If there exists a matrix in $ A \in S(\hat{G})$ with eigenvalues $0$ and $1$ with multiplicities $2+m_0$ and $2+m_1$, where $4+m_0+m_1=n$, such that $A$ does not have any diagonal elements equal to $0$ or $1$, then for any two nonnegative integers $t$ and $s$ satisfying $t+s=r$ there exists a matrix in $S(G)$ 
with eigenvalues $0$ and $1$ with multiplicities $2+m_0+t$ and $2+m_1+s$.
\end{lemma}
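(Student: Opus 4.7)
The plan is to apply Lemma \ref{HS04} to ``blow up'' a single vertex $v$ of $\hat G$ that lies in the $p_j$-part of the $j$-th component being enlarged. Concretely, pick $v$ to be one of the $p_j$ vertices that form a clique in $\hat G$ and are non-adjacent exactly to the $q_j$ vertices of $Y_j$ (for $j=0$, $v$ is simply adjacent to every other vertex). Order the vertices so that $v$ is last and write $A$ in the block form required by Lemma \ref{HS04},
$$A = \npmatrix{A_1 & b \\ b^T & \mu_1},$$
where $\mu_1 = A_{vv}$, $A_1 \in S(\hat G - v)$, and the zero-nonzero pattern of $b$ records the neighbors of $v$ in $\hat G$. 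By hypothesis $\mu_1 \notin \{0, 1\}$.

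Next, using Theorem \ref{thm:Knall}, I would construct $B \in S(K_{r+1})$ with prescribed spectrum
$$(\mu_1, \underbrace{0, \ldots, 0}_{t}, \underbrace{1, \ldots, 1}_{s})$$
together with a unit eigenvector $u$ for $\mu_1$ having no zero entries. The condition that the first two listed eigenvalues be distinct is satisfied because $\mu_1$ differs from both $0$ and $1$, and the all-nonzero pattern has $r+1 \geq 2$ nonzero entries (the case $r=0$ is trivial: take $C=A$). Now define
$$C = \npmatrix{A_1 & b u^T \\ u b^T & B}.$$
By Lemma \ref{HS04}, the spectrum of $C$ is the spectrum of $A$ together with $\mu_2, \ldots, \mu_{r+1}$, which yields $0$ with multiplicity $2+m_0+t$ and $1$ with multiplicity $2+m_1+s$, as required.

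It remains to verify $C \in S(G)$, which is the heart of the argument. The principal block $A_1$ realizes $\hat G - v$, and $B$ gives a clique of size $r+1$ on the new vertices. Crucially, because $u$ has no zero entries, the $(i,j)$-entry $b_i u_j$ of $b u^T$ is nonzero exactly when $b_i \neq 0$, so each of the $r+1$ new vertices inherits precisely the external adjacencies of $v$. Consequently the $r+1$ new vertices together with $X_j \setminus \{v\}$ form a clique of size $p_j + r$ whose set of non-neighbors is exactly $Y_j$, and every other component of $\hat G^c$ is preserved. This is precisely the graph $G$.

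The only real obstacle is arranging that $u$ has no zero entries while also prescribing the spectrum of $B$ on $K_{r+1}$; this is exactly the extra strength supplied by Theorem \ref{thm:Knall}, and the hypothesis that no diagonal entry of $A$ equals $0$ or $1$ is precisely what guarantees $\mu_1 \neq 0,1$ so that the theorem applies with the desired spectrum.
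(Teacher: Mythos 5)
Your proof is correct and follows essentially the same route as the paper: both glue a matrix $B \in S(K_{r+1})$ with spectrum $(\mu_1, 0^{(t)}, 1^{(s)})$ onto $A$ through the diagonal entry $\mu_1$ of a vertex in the $p_j$-part, using Lemma \ref{HS04} with an all-nonzero eigenvector for $\mu_1$. Your write-up is in fact somewhat more explicit than the paper's about why the resulting matrix lies in $S(G)$ and about where Theorem \ref{thm:Knall} and the hypothesis $\mu_1 \notin \{0,1\}$ are used.
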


\begin{proof}
Let $A \in S(\hat{G})$ have eigenvalues $0$ and $1$ with multiplicities $2+m_0$ and $2+m_1$. Let $a_{jj}$ be one of the diagonal element of $A$ from a block corresponding to $K_{p_j}$. Let  $B \in S(K_{r+1})$ have eigenvalues
$a_{jj}$, 0 with multiplicity $t$ and 1 with multiplicity $s$, $t+s=r$. 
(Here we need the condition on the diagonal elements of $A$, but only in the case when $s=0$ or $t=0$. If for example $s=0$ and $a_{jj}=0$, then matrix $B$ that we need does not exist.)

Furthermore, we demand that the eigenvector corresponding to $a_{jj}$ has no zero entries. Now we apply Lemma \ref{HS04} to join matrices $A$ and $B$ through the diagonal element $a_{jj}.$ The resulting matrix belongs to $S(G)$ and has the desired eigenvalues. 
\end{proof}

\begin{mainproof}
Note that the join with $K_r$ in the statement of the theorem, adds some unconnected vertices to the graph $G$, therefore it is sufficient to study the graphs without the join. 
If $(p_0,k)=(0,1)$, then $G=(K_{p,q})^c=K_p\cup K_q$ and the statement follows by Theorem \ref{thm:Knall}.

Following the proofs of Lemma \ref{ex:K20UK11}, Lemma \ref{11}  and Lemma \ref{1011} it is easy to see that matrices constructed in those lemmas do not have any of the diagonal elements equal to $1$ or to $0$.

Applying Lemma \ref{thm:Helena} to Lemma \ref{ex:K20UK11} proves the theorem for all graphs of the form $$(K_{p_0,0}\cup K_{p_1,q_1})^c,$$  where $p_0 \geq 2,$ $p_1 \geq 1$ and $q_1 \geq 1.$ Lemma \ref{thm:Helena}  together with Lemma \ref{11} gives the result for all graphs of the form $$G=(K_{p_1,q_1}\cup\ldots\cup K_{p_k,q_k})^c,$$ where $p_i \geq 1$ and $q_i \geq 1$. Finally, Lemma \ref{thm:Helena} applied to Lemma \ref{1011} proves the theorem for graphs of the form:
$$G=(K_{p_0,0} \cup K_{p_1,q_1}\cup\ldots\cup K_{p_k,q_k})^c,$$
where $p_i \geq 1$ and $q_i \geq 1$ and $k \geq 2.$
\end{mainproof}

Note that in the case $(p_0,k)= (1,1)$ we have that 
$q\left((K_{1,0} \cup K_{p,q})^c\right)\geq 3$  by \cite[Theorem 3.2]{MR3118943}. Hence
$$q\left((K_{1,0} \cup K_{p,q})\vee K_r\right)^c \geq 3 $$ and this case is excluded in Theorem \ref{thm:mr+2}. It follows from the proposition below that $$\Mm\left((K_{1,0} \cup K_{p,q})\vee K_r\right)^c =\left\lfloor \frac{n}{3}\right\rfloor.$$

\begin{proposition}
 Let $n=p+q+1,$ and let $n_1, n_2, n_3$ be positive integers such that $n_1+n_2+n_3=n.$ Then 
there exists $A \in S((K_{1,0} \cup K_{p,q})^c)$ with eigenvalues $\lambda_1,\lambda_2, \lambda_3$ with multiplicities $n_1, n_2, n_3$, respectivelly. 
\end{proposition}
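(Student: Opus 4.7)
The graph $(K_{1,0}\cup K_{p,q})^c$ equals $K_1 \vee (K_p \cup K_q)$: a single apex vertex adjacent to every vertex of two disjoint cliques $K_p$ and $K_q$ that are not joined to each other. Hence every $A \in S(G)$ has the block form
$$A = \npmatrix{d & b_1^T & b_2^T \\ b_1 & A_1 & 0 \\ b_2 & 0 & A_2},\qquad A_1 \in S(K_p),\ A_2\in S(K_q),$$
with $b_1\in\R^p$ and $b_2\in\R^q$ both having only nonzero entries. The plan is to force $b_i/\|b_i\|$ to be a unit eigenvector of $A_i$ associated with some eigenvalue $\nu_i$. Completing these to orthonormal eigenbases of $A_1$ and $A_2$ and conjugating $A$ by the resulting orthogonal matrix (keeping the apex coordinate fixed) block-diagonalises $A$ as the direct sum of the $3\times 3$ matrix
$$M=\npmatrix{d & \|b_1\| & \|b_2\| \\ \|b_1\| & \nu_1 & 0 \\ \|b_2\| & 0 & \nu_2}$$
with a diagonal block carrying the remaining $p-1$ eigenvalues of $A_1$ and $q-1$ eigenvalues of $A_2$.

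Having set this up, pick any three distinct reals $\lambda_1 > \lambda_2 > \lambda_3$ (the statement asks only for existence of some three distinct eigenvalues), set $\nu_1 = \lambda_2 + \varepsilon$ and $\nu_2 = \lambda_2 - \varepsilon$ for a small $\varepsilon > 0$, and let $d = \lambda_1 + \lambda_3 - \lambda_2$ (as forced by the trace condition). The two remaining eigenvalue equations for $M$ reduce to a linear system in $\|b_1\|^2$ and $\|b_2\|^2$ whose solutions both expand as $\tfrac12(\lambda_1 - \lambda_2)(\lambda_2 - \lambda_3) + O(\varepsilon)$, strictly positive for all sufficiently small $\varepsilon>0$, so $M$ acquires the prescribed spectrum $\{\lambda_1, \lambda_2, \lambda_3\}$. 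Next, choose nonnegative integers $m_i^{(j)}$, $i=1,2,3$, $j=1,2$, with $m_i^{(1)} + m_i^{(2)} = n_i - 1$, $\sum_i m_i^{(1)} = p - 1$, and $\sum_i m_i^{(2)} = q - 1$; both constraints sum to $n - 3$, so this transportation problem is feasible. Finally, when $p \geq 2$, Theorem~\ref{thm:Knall} produces $A_1 \in S(K_p)$ with spectrum $(\nu_1, \lambda_1^{m_1^{(1)}}, \lambda_2^{m_2^{(1)}}, \lambda_3^{m_3^{(1)}})$, listed with $\nu_1$ first (which differs from the second entry because $\nu_1 \notin \{\lambda_1, \lambda_2, \lambda_3\}$ for generic $\varepsilon$), and with a $\nu_1$-eigenvector having no zero coordinates; scaling this eigenvector by $\|b_1\|$ gives $b_1$. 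When $p = 1$ take $A_1 = (\nu_1)$ and $b_1 = \|b_1\|$ directly. Construct $A_2$ and $b_2$ analogously. The assembled matrix $A$ lies in $S(G)$, and by the block decomposition its spectrum consists of $\{\lambda_1, \lambda_2, \lambda_3\}$ from $M$ together with $\lambda_i$ appearing $m_i^{(1)} + m_i^{(2)} = n_i - 1$ additional times, giving $\lambda_i$ total multiplicity $n_i$.

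The only nontrivial step is the $3\times 3$ inverse eigenvalue problem for $M$, i.e.\ realising the prescribed spectrum by a matrix of this tree-pattern with strictly positive bordering entries. With the symmetric choice $\nu_{1,2} = \lambda_2 \pm \varepsilon$ it reduces to the positivity check sketched above; everything else is combinatorial bookkeeping on top of Theorem~\ref{thm:Knall}.
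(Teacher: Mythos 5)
Your proof is correct and follows essentially the same route as the paper: both decompose the graph into a three-vertex $P_3$ core with cliques $K_p$ and $K_q$ attached through designated simple eigenvalues having zero-free eigenvectors (Theorem \ref{thm:Knall} together with the gluing of Lemma \ref{HS04}, which your orthogonal-conjugation step reproves), and both distribute the remaining $n-3$ multiplicities as $p-1$ into one clique and $q-1$ into the other. The only difference is that the paper exhibits an explicit $3\times 3$ core with spectrum $(3,\sqrt{3},-\sqrt{3})$, whereas you solve the $3\times 3$ arrowhead inverse eigenvalue problem for arbitrary $\lambda_1>\lambda_2>\lambda_3$ by taking interlacing diagonal entries $\lambda_2\pm\varepsilon$, a mild strengthening.
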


\begin{proof}
For $n=3$ any matrix in  $S((K_{1,0} \cup K_{p,q})^c)$ satisfies the conditions of the proposition. For example
$$A=\left(
\begin{array}{ccc}
 0 & 2 & 1 \\
 2 & 1 & 0 \\
 1 & 0 & 2 \\
\end{array}
\right) \in S((K_{1,0} \cup K_{1,1})^c).$$
has eigenvalues $(3, -\sqrt{3}, \sqrt{3})$.

Let $p_i$ and $q_i$, $i=1,2,3,$ be nonnegative integers such that $p_1+p_2+p_3=p-1$ and $q_1+q_2+q_3=q-1.$ Let $B_1 \in S(K_p)$ have eigenvalues $1$ with multiplicity $1$, $3$ with multiplicity $p_1$, $\sqrt{3}$ with multiplicity $p_2$ and $-\sqrt{3}$ with multiplicity $p_3$, $1+p_1+p_2+p_3=p$. Moreover, we demand that the eigenvector of $B_1$ corresponding to $1$ has no zero elements. Similarly, let  $B_2 \in S(K_p)$ have eigenvalues $2$ with multiplicity $1$, $3$ with multiplicity $q_1$, $\sqrt{3}$ with multiplicity $q_2$ and $-\sqrt{3}$ with multiplicity $q_3$, $1+q_1+q_2+q_3=q$, with eigenvector corresponding to $2$ having no zero elements. Matrices $B_1$ and $B_2$ exist by Theorem \ref{thm:Knall}. 

Now we use Lemma \ref{HS04} to first join matrix $A$ with $B_1$ through the diagonal element $1,$ and then to join the resulting matrix with $B_2$ through the diagonal element $2$. In this way be obtain a matrix in  $S((K_{1,0} \cup K_{p,q})^c)$
with eigenvalues $3$ with multiplicity $1+p_1+q_1$, $\sqrt{3}$ with multiplicity $1+p_2+q_2$ and $-\sqrt{3}$ with multiplicity $1+p_3+q_3$. Since we didn't pose any conditions on the parameters $p_i$ and $q_i$ this proves or claim. 
\end{proof}

\subsection{Complete bipartite graphs}

Another family of graphs with small minimal rank is the set of complete bipartite graphs. Note that $\mr(K_{m,n})=2$ 
for any positive integers $m,n$. We will prove that 
$\Mm(K_{m,n})=\lfloor \frac{m+n}{3} \rfloor$ if $m \ne n$.

\begin{theorem}\label{Kmn}
Let $m \leq n.$ Then any list of the form
\begin{equation}\label{KmnSpectrum}
 (\lambda_1,-\lambda_1,\lambda_2,-\lambda_2,\ldots,\lambda_m,-\lambda_m,\underbrace{0,\ldots,0}_{n-m}),
\end{equation}
where  $\lambda_1 > 0$ and $\lambda_i \geq 0$ for $i=2,\ldots,m$, 
 is the spectrum of a matrix $A \in S(K_{m,n})$.
\end{theorem}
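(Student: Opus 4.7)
The plan is to construct $A\in S(K_{m,n})$ of the special block form
$$A=\npmatrix{0_m & B \\ B^T & 0_n}$$
for a suitable $B\in\R^{m\times n}$ with no zero entries. Because the diagonal blocks vanish, the spectrum of $A$ is automatically symmetric about $0$: a short computation gives $A^2=\npmatrix{BB^T & 0 \\ 0 & B^T B}$, so the nonzero eigenvalues of $A^2$ are the squares $\sigma_i(B)^2$ of the singular values of $B$, each occurring with multiplicity $2$, while the remaining eigenvalues of $A^2$ vanish. Combined with $\tr(A)=0$, this forces the spectrum of $A$ to be exactly $\pm\sigma_1(B),\ldots,\pm\sigma_m(B)$ together with $n-m$ additional zeros. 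So the theorem reduces to producing an $m\times n$ matrix $B$ with no zero entries whose singular values are $\lambda_1,\ldots,\lambda_m$.

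When every $\lambda_i$ is positive, the construction is immediate: pick orthonormal vectors $u_1,\ldots,u_m\in\R^n$ with no zero coordinates (possible since $m\leq n$; take the first $m$ columns of any $n\times n$ orthogonal matrix with no zero entries) and let the $i$-th row of $B$ be $\lambda_i u_i^T$. Then $BB^T=\mathrm{diag}(\lambda_1^2,\ldots,\lambda_m^2)$ provides the correct singular values, and $B_{ij}=\lambda_i(u_i)_j\neq 0$ for all $i,j$. For the general case I would instead write $B=U\Sigma V^T$ via the singular value decomposition, with $U\in O(m)$, $V\in O(n)$ and $\Sigma$ the $m\times n$ rectangular diagonal matrix with $\lambda_1,\ldots,\lambda_m$ on the main diagonal. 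For each fixed entry position $(j,k)$, the map $(U,V)\mapsto (U\Sigma V^T)_{jk}=\sum_i\lambda_i U_{ji}V_{ki}$ is a polynomial function on $O(m)\times O(n)$. Since $\lambda_1>0$, this polynomial is not identically zero: fixing any $V$ with $V_{k1}\neq 0$, the map restricts to a nontrivial linear functional on the unit-sphere constraint for the $j$-th row of $U$. Hence the zero set is a proper algebraic subvariety of $O(m)\times O(n)$, and the union over the finitely many pairs $(j,k)$ is still proper; any $(U,V)$ outside this union produces a valid $B$.

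The main obstacle is precisely the case in which some of the $\lambda_i$'s vanish, which forces $B$ to be rank-deficient. The naive row-based construction then yields forbidden zero rows, so the two-sided SVD factorization is essential: the left orthogonal factor $U$ must mix the rows enough that the guaranteed nonzero singular value $\lambda_1$ populates every entry of $B$, without accidental cancellations from the remaining terms. The Zariski-density argument above handles this uniformly in the spectrum; alternatively, one can begin from the rank-one matrix $B_0=(\lambda_1/\sqrt{mn})\,\mathbf{1}_m\mathbf{1}_n^T$, which trivially has no zero entries and singular values $(\lambda_1,0,\ldots,0)$, and perturb it within the manifold of matrices with prescribed singular values. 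By continuity the entries remain nonzero in a neighbourhood of $B_0$, and moving along this manifold we can inflate the remaining singular values to the required values $\lambda_2,\ldots,\lambda_m$ while keeping all entries nonzero.
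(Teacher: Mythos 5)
Your proposal follows the same route as the paper: put $A=\npmatrix{0_m & B \\ B^T & 0_n}$, reduce the theorem to finding an $m\times n$ matrix $B$ with no zero entries whose singular values are $\lambda_1,\dots,\lambda_m$, and produce such a $B$ by a genericity argument on orthogonal factors. One step does not hold up as written: knowing the spectrum of $A^2$ together with $\tr(A)=0$ does \emph{not} force the spectrum of $A$ to be $\pm\sigma_1,\dots,\pm\sigma_m,0,\dots,0$. For example, the data ``$A^2$ has eigenvalues $4,4,1,1,1,1$ and $\tr(A)=0$'' is equally consistent with the spectrum $2,2,-1,-1,-1,-1$. What you are missing is the symmetry of the spectrum of $A$ about $0$, which is immediate from the signature similarity $\npmatrix{I_m & 0\\ 0 & -I_n}A\npmatrix{I_m & 0\\ 0 & -I_n}=-A$, or from the Schur-complement identity $\det(xI-A)=x^{n-m}\det(x^2I_m-BB^T)$ that the paper uses; with that symmetry your multiplicity count via $A^2$ goes through. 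The rest of your argument is sound: the Zariski-density argument over $(U,V)$ (run on the irreducible piece $SO(m)\times SO(n)$, where every unit vector still occurs as a prescribed row of $U$ for $m\ge 2$) correctly exploits $\lambda_1>0$ to see each entry of $U\Sigma V^T$ is a not-identically-zero polynomial, and it is somewhat more self-contained than the paper's version, which first builds $B_0$ with nonzero rows by factoring a matrix $A_0\in S(K_m)$ supplied by Theorem \ref{thm:Knall} and then applies a single generic right orthogonal factor. I would delete the closing perturbation sketch: inflating the singular values of a rank-one matrix ``while keeping all entries nonzero'' is exactly what needs proof and is not delivered by a local continuity argument, whereas your genericity argument already handles the degenerate case.
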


\begin{proof}
Let \begin{equation}\label{eq:BBT}
A=\npmatrix{0_m & B \\ B^T & 0_n},
\end{equation}
where $B$ is an $m \times n$ matrix. Then, using Schur complement, we can compute the characteristic polynomial of $A$ as follows:
 \begin{align*}
 \det(xI-A)&=\det(xI_n)\det(xI_m-x^{-1}BB^T)\\
 &= x^{n-m}\det(x^2I_m-BB^T). 
 \end{align*}
 So, it follows that if $\mu_1, \ldots, \mu_m$ are the eigenvalues of $BB^T$, then
 the eigenvalues of $A$ are $\sqrt{\mu_i},$ $-\sqrt{\mu_i}$ for $i=1,2,\ldots, m,$ together with $n-m$ instances of the eigenvalue $0$. It remains to show that we can find a matrix $B$ that does not contain any zero elements so that $BB^T$ has eigenvalues $\lambda_i$, $i=1,2, \ldots, m$.
 
Let $A_0$ be a symmetric matrix in $S(K_m)$ with eigenvalues $\lambda_1,\lambda_2,\ldots,\lambda_m$ and let $U$ be an orthogonal matrix that diagonalises $A_0$: $U^TA_0U=D$. Then 
 $A_0=UD^{\frac{1}{2}}(UD^{\frac{1}{2}})^T,$
 and the matrix $B_0=UD^{1/2}$ satisfies the spectral conditions that we need, however it can have some zero entries. 
 
Since we have choosen $A_0 \in S(K_m)$ we know that $B_0$ has no zero rows, so there exists an orthogonal matrix $V$ such that $B_0V$ has no zero elements. (A generic $n \times n$ orthogonal matrix will accomplish this.) We take $B=B_0V$ to finish the proof.
\end{proof}

\begin{corollary}
For any positive integers $m,n$,
 $$\Mm(K_{m,n})=\begin{cases}
   m, & m=n,\\
    \lfloor \frac{m+n}{3} \rfloor, &\text{otherwise.}
 \end{cases}$$
\end{corollary}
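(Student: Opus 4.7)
The plan is to verify both of the inequalities $\Mm(K_{m,n})\le \lfloor (m+n)/3\rfloor$ and $\Mm(K_{m,n})\ge \lfloor (m+n)/3\rfloor$ when $m\ne n$, together with the analogous equalities for $m=n$. The case $m=n$ is essentially free: the trivial bound $\Mm(G)\le \lfloor |G|/2\rfloor$ gives $\Mm(K_{m,m})\le m$, and Theorem~\ref{Kmn} applied with $\lambda_1=\cdots=\lambda_m=1$ exhibits a matrix in $S(K_{m,m})$ with spectrum $(1,-1,1,-1,\ldots,1,-1)$, showing $\Mm(K_{m,m})\ge m$.

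For the upper bound when $m\ne n$ the plan is to establish $q(K_{m,n})\ge 3$ and then invoke~(\ref{eqn:basic}). Suppose for contradiction that some $A\in S(K_{m,n})$ has only two distinct eigenvalues $\alpha,\beta$; then $A^2=(\alpha+\beta)A-\alpha\beta I$. Writing
$$A=\npmatrix{D_1 & B \\ B^T & D_2}$$
with $D_1,D_2$ diagonal and $B$ entrywise nonzero, the off-diagonal block of this identity gives $D_1B+BD_2=(\alpha+\beta)B$. Looking at this equation entrywise and using that every $b_{ij}$ is nonzero forces $(D_1)_{ii}+(D_2)_{jj}=\alpha+\beta$ for all $i,j$, and hence $D_1=aI_m$ and $D_2=cI_n$ with $a+c=\alpha+\beta$. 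The diagonal blocks of the same identity then reduce to $BB^T=\gamma I_m$ and $B^TB=\gamma I_n$ for the common scalar $\gamma=ac-\alpha\beta$. Taking traces yields $m\gamma=n\gamma$, and since $\gamma\ne 0$ (otherwise $B=0$, contradicting the pattern), this forces $m=n$, a contradiction. Hence $q(K_{m,n})\ge 3$ when $m\ne n$, and (\ref{eqn:basic}) gives $\Mm(K_{m,n})\le \lfloor (m+n)/3\rfloor$.

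For the matching lower bound I would apply Theorem~\ref{Kmn} with parameters chosen to balance the multiplicities. Setting $p=\lfloor (m+n)/3\rfloor$, take $\lambda_1=\cdots=\lambda_p=\lambda$ for some $\lambda>0$ and $\lambda_{p+1}=\cdots=\lambda_m=0$. The resulting matrix in $S(K_{m,n})$ has $\lambda$ and $-\lambda$ as eigenvalues of multiplicity $p$ each and $0$ of multiplicity $m+n-2p\ge p$; in particular every eigenvalue has multiplicity at least $p$, so $\Mm(K_{m,n})\ge \lfloor (m+n)/3\rfloor$.

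The main obstacle is the algebraic step $q(K_{m,n})\ge 3$: extracting from the single identity $A^2=(\alpha+\beta)A-\alpha\beta I$ the strong conclusion that both $D_1$ and $D_2$ are scalar matrices is the linchpin, and it depends crucially on the complete bipartite pattern, i.e.\ on $B$ having no zero entries. Once that is in hand, the remaining pieces are either a trivial counting bound or a direct, well-chosen instance of Theorem~\ref{Kmn}.
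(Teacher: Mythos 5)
Your proof follows the same skeleton as the paper's: the upper bound comes from $q(K_{m,n})$ via \eqref{eqn:basic}, and the lower bound from Theorem \ref{Kmn} with repeated $\lambda_i$'s. The one genuine difference is that the paper simply cites $q(K_{m,n})=3$ for $m\ne n$ from the literature, whereas you prove it from scratch via the minimal-polynomial identity $A^2=(\alpha+\beta)A-\alpha\beta I$. That computation is correct and is a worthwhile self-contained replacement for the citation: the complete bipartite pattern forces $D_1=aI_m$, $D_2=cI_n$, hence $BB^T=\gamma I_m$ and $B^TB=\gamma I_n$ with $m\gamma=n\gamma=\tr(BB^T)>0$, so $m=n$.

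However, your lower bound has a gap (one that the paper's own proof shares). Taking $\lambda_1=\cdots=\lambda_p=\lambda$ in Theorem \ref{Kmn} requires $p\le m$ (assuming $m\le n$), i.e. $\lfloor\frac{m+n}{3}\rfloor\le m$, and this fails whenever $n\ge 2m+3$: the spectrum \eqref{KmnSpectrum} simply has no room for a nonzero eigenvalue of multiplicity exceeding $m$. In that range the statement itself is false. For instance $K_{1,5}$ is a star, hence a tree, so $\Mm(K_{1,5})=1$ by the remark at the start of Section \ref{Sec 4}, while the claimed formula gives $\lfloor 6/3\rfloor=2$; more generally $K_{m,n}$ contains an induced star $K_{1,n}$ on $n+1$ vertices, so Theorem \ref{thm:tree} gives $\Mm(K_{m,n})\le m$. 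Your construction (all $\lambda_i$ equal when $m\le\lfloor\frac{m+n}{3}\rfloor$, and your balanced choice otherwise) actually establishes the lower bound $\min\{m,\lfloor\frac{m+n}{3}\rfloor\}$, which together with the two upper bounds is the correct value for $m<n$; you should either restrict the claim to $n\le 2m+2$ or replace $\lfloor\frac{m+n}{3}\rfloor$ by $\min\{m,\lfloor\frac{m+n}{3}\rfloor\}$ and note explicitly where the constraint $p\le m$ enters.
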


\begin{proof}
 Note that by \cite[Corollary 6.5]{MR3118943}, we have
 $$q(K_{m,n})=\begin{cases}
   2, & m=n,\\
    3, & m \ne n.
 \end{cases}$$
From \eqref{eqn:basic} it follows  that  $\Mm(K_{m,m})\leq m$ and $\Mm(K_{m,n})\leq\lfloor\frac{m+n}{3}\rfloor$ for $m \ne n$. In the case $m=n$ this implies by Theorem \ref{Kmn} that $\Mm(K_{m,m})=m$.

 If $m \ne n$, let 
 $m+n=3s+k,$ where $k \in \{0,1,2\}$. If $k\in \{0,1\}$, then choose in Theorem \ref{Kmn} a matrix $A \in K_{m,n}$ with eigenvalues $\lambda$, $-\lambda$ and $0$ with  multiplicities $s$, $s$,
 and $s+k$, respectively. In the case $k=2$, take  $A \in K_{m,n}$ with eigenvalues $\lambda$, $-\lambda$ and $0$ with  multiplicities $s+1$, $s+1$ and $s$, respectively.
 This shows that $\Mm(K_{m,n})=s= \lfloor \frac{m+n}{3} \rfloor$ if $m \ne n$.
%
\end{proof}

\section{Graphs with small $\Mm$}\label{Sec 4}

In this section we look at some cases when $\Mm$ is small. Since we know that the largest and the smallest eigenvalue of a matrix $A \in S(T)$, where $T$ is a tree, both have multiplicities 1, \cite[Remark 4]{MR1902112}, we have 
$\Mm(T)= 1$ for all trees $T$.

In \cite{MR2549052} a characterisation of graphs with maximal multiplicity of an eigenvalue
equal to 2 is given. It is shown that if in the graph $G$ there exist two
independent induced paths that cover all the vertices of $G$ and such that any
edges between the two paths can be drawn so that they do not cross, then $G$ has maximal multiplicity of an eigenvalue
equal to 2. Such graphs are called \emph{graphs with two parallel paths}. It clear, that if $\mr(G)=n-2$, then $\Mm(G)$ is either equal to $1$ or to $2$. We will show that both cases can occur. 



First, we start by Theorem that will imply $\Mm(G)=1$ for certain unicyclic graphs $G$ having $M(G)=2$.

\begin{lemma}\label{prop:arrow}
Let 
\begin{equation}\label{arrow}
A=\npmatrix{d & a^T \\ a & D},
\end{equation}
 where $a=\npmatrix{a_1 & a_2 & \ldots & a_n}^T$ and $D$ is a diagonal matrix with diagonal elements $d_1,d_2,\ldots,d_n$. Let $\lambda$ be an eigenvalue of $A$ with multiplicity $k$. 
If the multiplicity of $\lambda$ in $D$ is either $k-1$ or  $k$, then $d_i=\lambda$ implies $a_i=0.$
 \end{lemma}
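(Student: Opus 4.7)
The plan is to analyze the eigenspace of $\lambda$ in $A$ directly by exploiting the arrow structure, and argue that if the conclusion fails, then the geometric multiplicity of $\lambda$ is too small. After reordering rows and columns (which is harmless since $D$ is diagonal), I will assume $d_1 = \cdots = d_m = \lambda$ and $d_{m+1}, \ldots, d_n \neq \lambda$, where $m \in \{k-1, k\}$ is the multiplicity of $\lambda$ on the diagonal of $D$. Because $A$ is symmetric, the eigenspace for $\lambda$ has dimension exactly $k$.

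Writing a candidate eigenvector as $v = (v_0, v_1, \ldots, v_n)^T$, the system $(A - \lambda I)v = 0$ splits into the top equation
$$(d - \lambda) v_0 + \sum_{i=1}^n a_i v_i = 0$$
together with the $n$ scalar equations $a_i v_0 + (d_i - \lambda) v_i = 0$. For $i > m$ the latter determines $v_i = -a_i v_0 / (d_i - \lambda)$, while for $i \le m$ it collapses to the single scalar condition $a_i v_0 = 0$.

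The argument is then a direct contradiction. Suppose some $j \le m$ has $a_j \neq 0$. The $j$-th equation forces $v_0 = 0$, whence the equations for $i > m$ give $v_i = 0$ as well. The remaining freedom lies in $(v_1, \ldots, v_m)$, subject only to the single constraint $\sum_{i=1}^m a_i v_i = 0$ coming from the top equation, which is nontrivial because $a_j \neq 0$. Hence the eigenspace has dimension at most $m - 1 \le k - 1 < k$, contradicting the assumption that the geometric multiplicity is $k$. Therefore $a_i = 0$ whenever $d_i = \lambda$, as claimed.

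The argument is short and the only thing that requires care is ensuring the case split is sharp: the inequality $m \le k$ is what makes the final contradiction $m - 1 < k$ work, and this is precisely where the hypothesis $m \in \{k-1, k\}$ (rather than some larger range) is used. No deeper obstacle arises, since the arrow structure essentially reduces the eigenvalue problem to linear algebra on a scalar $v_0$ plus independent coordinate equations.
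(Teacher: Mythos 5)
Your proof is correct, but it takes a genuinely different route from the paper. The paper works with the characteristic polynomial
\[
p(x)=(x-d)\prod_{i=1}^n(x-d_i)-\sum_{i=1}^n a_i^2 \prod_{j \neq i}(x-d_j),
\]
isolates the one term not divisible by $(x-\lambda)^{k-l+1}$ after splitting off the diagonal entries equal to $\lambda$, and concludes $\sum_{i\le l} a_i^2=0$, hence $a_i=0$ for all such $i$ (using that the $a_i$ are real). You instead argue on the eigenspace: a nonzero $a_j$ with $d_j=\lambda$ forces $v_0=0$ in every $\lambda$-eigenvector, which then kills the coordinates indexed by $d_i\neq\lambda$ and leaves only the hyperplane $\sum_{i\le m} a_i v_i=0$ inside the $\lambda$-block, so the geometric multiplicity is at most $m-1\le k-1$, contradicting diagonalizability of the symmetric matrix $A$. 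Your version is arguably cleaner and more structural --- it makes visible exactly where the hypothesis $m\le k$ enters and avoids the polynomial bookkeeping --- while the paper's computation has the minor advantage of producing the identity $\sum a_i^2=0$ for all the offending entries at once and of being phrased entirely in terms of $p(x)$. Both arguments use the reality/symmetry of $A$ in an essential way (you via geometric $=$ algebraic multiplicity, the paper via $\sum a_i^2=0\Rightarrow a_i=0$), so neither generalizes further than the other.
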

 
 \begin{proof}
The characteristic polynomial of $A$ is equal to
  $$p(x)=(x-d)\prod_{i=1}^n(x-d_i)-\sum_{i=1}^n a_i^2 \prod_{j \neq i}(x-d_j).$$
Since $\lambda$ is an eigenvalue of $A$ of multiplicity $k$, it follows that $p(x)=(x-\lambda)^kp_1(x)$, where $p_1(\lambda)\neq 0$. By interlacing, $\lambda$ is an eigenvalue of $D$ with multiplicity $l$ , where  $k-1\leq l \leq k+1$. 
Suppose that $l \leq k$ and without loss of generality we let $d_1=d_2=\ldots=d_{l}=\lambda$ and we assume $d_{l+1}, \ldots,d_n$ are all different from $\lambda$. Then we have:
\begin{align*}
(x-\lambda)^{k-l+1}p_1(x)&=(x-d)(x-\lambda)\prod_{i=l+1}^n(x-d_i)-\\ &-\sum_{i=1}^l a_i^2\prod_{j=l+1}^n(x-d_j)-\sum_{i=l+1}^l a_i^2(x-\lambda)\prod_{
\substack{
 j=l+1\\
 j\neq i}}^n(x-d_j).
\end{align*}
Since $k-l+1 \geq 1$, all but one therm in the above expression is divisible by $(x-\lambda)$. The therm  that is not divisible by $(x-\lambda)$ is:
$$\sum_{i=1}^l a_i^2\prod_{j=l+1}^n(x-d_i)=\left(\sum_{i=1}^l a_i^2\right)\prod_{j=l+1}^n(x-d_i).$$
This implies that $\sum_{i=1}^l a_i^2=0$ and $a_1=\ldots=a_l=0$.
 \end{proof}
 

 \begin{theorem}\label{thm:star}
Let $G$ be a graph with $A \in S(G)$ of the form
\begin{equation}\label{structure1} 
 A=\npmatrix{d & b_1^T & \ldots &b_p^T & c^T \\
                          b_1 & B_1 & &&&  \\
                          \vdots&&\ddots&&&\\
                          b_p &  & &B_p&  \\
                          c & &&& D} \in \R^{n \times n},
\end{equation}   
where $B_i \in \R^{n_i \times n_i}$, $n_i \geq 2$, are not diagonal matrices, and $D$ is an $m \times m$ diagonal matrix. Moreover, $b_i \in \R^{n_i}$ are not zero vectors and $c \in \R^{m}$ has no zero elements.
Let matrix $A$ be of the form \eqref{structure1} and have $t$ distinct eigenvalues. Then  $$\Mm(A) \leq \frac{2(n-m-p-1)}{t+1}+1.$$ In particular: $$\Mm(G) \leq \frac{n-m-p-1}{2}+1.$$
\end{theorem}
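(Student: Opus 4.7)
The plan is to reduce $A$ to an arrow matrix via an orthogonal similarity and then apply Lemma~\ref{prop:arrow}. First, I would diagonalize each block as $B_i = U_i \Lambda_i U_i^T$ with $U_i$ orthogonal and conjugate $A$ by $W = 1 \oplus U_1 \oplus \cdots \oplus U_p \oplus I_m$, producing an isospectral arrow matrix $\tilde{A}$ with diagonal part $\mathrm{diag}(d, \Lambda_1, \ldots, \Lambda_p, D)$ and arrow entries $(U_1^T b_1, \ldots, U_p^T b_p, c)$.

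Next, for each distinct eigenvalue $\lambda_j$ of $A$, I would track $m_j$ (its multiplicity in $A$), $r_j^{(i)}$ (in $B_i$), and $s_j$ (in $D$). By Cauchy interlacing between $\tilde{A}$ and its $(n-1)\times(n-1)$ block-diagonal principal submatrix, $\epsilon_j := \sum_i r_j^{(i)} + s_j - m_j \in \{-1,0,1\}$. Set $T_2 = \{j : \epsilon_j \le 0\}$. For $j \in T_2$, Lemma~\ref{prop:arrow} forces every arrow entry sitting at a $\lambda_j$-diagonal position to vanish. Since $c$ has no zero entries this forces $s_j = 0$; and since the columns of $U_i$ at the $\lambda_j$-positions of $\Lambda_i$ form an orthonormal basis of the $\lambda_j$-eigenspace $V_i^{(\lambda_j)}$ of $B_i$, the vanishing of the corresponding entries of $U_i^T b_i$ translates directly into $b_i \perp V_i^{(\lambda_j)}$.

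The dimension count then comes from the observation that for each fixed $i$ the subspaces $V_i^{(\lambda_j)}$, $j \in T_2$, are pairwise orthogonal and all contained in the hyperplane $b_i^\perp$, giving $\sum_{j \in T_2} r_j^{(i)} \le n_i - 1$. Summing over $i$ and using $\sum_i r_j^{(i)} = m_j + \epsilon_j \ge m_j - 1 \ge \Mm(A) - 1$ for $j \in T_2$,
$$|T_2|\bigl(\Mm(A) - 1\bigr) \;\le\; \sum_{j \in T_2}(m_j - 1) \;\le\; \sum_{i=1}^{p} (n_i - 1) \;=\; n - m - p - 1.$$

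Finally, I would bound $|T_2|$ from below. Comparing total multiplicities in $\tilde{A}$ (size $n$) and in its block-diagonal submatrix (size $n-1$), and noting that any eigenvalue of the submatrix outside the spectrum of $A$ contributes multiplicity exactly one by interlacing, yields $\sum_j \epsilon_j \le -1$. Writing $t_\alpha = |\{j : \epsilon_j = \alpha\}|$, this reads $t_{-1} \ge t_1 + 1$, forcing $|T_2| = t_{-1} + t_0 \ge \lceil (t+1)/2 \rceil$. Substituting gives the main bound $\tfrac{t+1}{2}(\Mm(A) - 1) \le n - m - p - 1$. The ``in particular'' statement then follows at once from the coarser integer estimate $|T_2| \ge 2$, which is valid for every $t \ge 2$ (the degenerate case $t=1$ forces $p = m = 0$ and $n = 1$, where the bound is immediate). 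I expect the main subtlety to be in translating Lemma~\ref{prop:arrow} from its scalar-diagonal setting to the present block-diagonal one, which works precisely because after fixing a spectral basis of each $B_i$ the lemma's pointwise vanishing conclusion becomes the orthogonality condition $b_i \perp V_i^{(\lambda_j)}$ used above.
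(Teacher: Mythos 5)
Your proof is correct and follows essentially the same route as the paper's: orthogonally reduce $A$ to arrow form, apply Lemma~\ref{prop:arrow} to the eigenvalues whose multiplicity does not increase in the principal submatrix $B_1\oplus\cdots\oplus B_p\oplus D$ (your set $T_2$, the paper's $\lambda_1,\dots,\lambda_s$), and bound $\sum_{j}r_j^{(i)}$ by $n_i-1$ inside each block using $b_i\neq 0$. The only (harmless) divergence is the ``in particular'' step, where the paper invokes $q(G)\geq 3$ to force $t\geq 3$, while you note that $|T_2|\geq 2$ already holds for every $t\geq 2$; both yield the stated bound.
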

 
 \begin{proof}
 Let $A$ have $t$ distinct eigenvalues $\lambda_i$, $i=1,2,\ldots,t$, with multiplicities $k_1, k_2,\ldots,k_t$. If $\min_i \{k_i\}=1$, then $\Mm(A)=1$ and the statement follows. So, suppose $k_i \geq 2$ for
 $i=1,2,\ldots,t$. By the interlacing theorem each $\lambda_i$ is an eigenvalue of the submatrix  $B\oplus D=B_1\oplus\ldots\oplus B_p \oplus D$ of multiplicity $k_i-1$, $k_i$ or $k_i+1$. Without loss of generality we may assume that $\lambda_i$, $i=1,2,\ldots,s$, have multiplicities $k_i-1$ or $k_i$ as eigenvalues of $B\oplus D$, and eigenvalues $\lambda_i$, $i=s+1,\ldots,t$, have multiplicities $k_i+1$ as eigenvalues of 
 $B\oplus D$. Denote $s'=t-s$. It follows that
  \begin{equation*}
   k_1+k_2+\ldots+k_s-s+k_{s+1}+\ldots +k_t+s' \leq \sum_{i=1}^t {\rm mult}_{B\oplus D} (\lambda_i) \leq n-1,
  \end{equation*}
   and from here $s-s'\geq 1$. From $t=s+s'$ it now follows that $s \geq \frac{t+1}{2}$.
   
Let $U_B=U_1\oplus\ldots\oplus U_p$ be an orthogonal matrix that diagonalises $B=B_1\oplus\ldots \oplus B_p$, i.e. $U_BBU_B^T=D_{1}\oplus \ldots \oplus D_p$, where $D_i\in \R^{n_i \times n_i}$ 
are all diagonal matrices. Then $U=1 \oplus U_B \oplus I_{m}$ puts $A$ in the form of Lemma \ref{prop:arrow}:
$$UAU^T=\npmatrix{d & b_1^T U_1^T& \ldots &b_{p}^TU_p^T & c^T \\
                          U_1b_1 & D_1 & &&  \\
                          \vdots&&\ddots&&\\
                          U_pb_{p} && &D_{p} & \\
                          c & &&& D}.
$$
If for some $j$, $j \in \{1,2,\ldots,p\}$, all the eigenvalues of $B_j$ are among $\lambda_i$, $i=1,2,\ldots,s$, then $U_jb_j=0$ by Lemma \ref{prop:arrow} and hence $b_j=0$, a contradiction. 
So for each $j$, $j \in \{1,2,\ldots,p\}$, at least one of the eigenvalues of $B_j$ has to be different than $\lambda_i$, $i=1,2,\ldots,s$. Since we are assuming that $c^T$ has no zero elements, Lemma \ref{prop:arrow} also tells us that $\lambda_i$, $i=1,\ldots, s$, 
are not eigenvalues of $D$.  Since $k_i \geq 2$, $\lambda_1,\ldots,\lambda_s$ are the eigenvalues of $B$ with multiplicities at least $k_i-1\geq 1$. 
The sum of multiplicities of  $\lambda_1$, \ldots, $\lambda_s$ in $B$ gives us
 $$k_1+k_2+\ldots+k_s-s \leq n_1+\ldots+n_p-p.$$
Since $\Mm(A) \leq k_i$ for $i=1,\ldots,s$, we have
 $$s\, \Mm(A)  \leq n_1+\ldots+n_p-p+s=n-m-p-1+s$$
 and therefore
 $$\Mm(A)   \leq \frac{n-m-p-1}{s}+1  \leq \frac{2(n-m-p-1)}{t+1}+1.$$
 In particular, by \cite[Theorem 3.2]{MR3118943} the minimal number of distinct eigenvalues of $A$ is $q(G) \geq 3$ and thus we have $t\geq 3$ and the result is proved.                     
%
%
 \end{proof}

  \begin{corollary}\label{cor:small}
Let $G$ be a graph with $A \in S(G)$ of the form
\begin{equation}\label{structure} 
 A=\npmatrix{d & b^T & c^T \\
                          b & B & 0 \\
                          c & 0 & D} \in \R^{n \times n},
\end{equation}   
where $B\in M_{n_1 \times n_1}(\R)$, $D$ is  an $n_2 \times n_2$ diagonal matrix, $b \in \R^{n_1}$, b$\neq 0$, and $c \in \R^{n_2}$ has no zero elements. Then $$\Mm(G) \leq \frac{n_1+1}{2}.$$ 
\end{corollary}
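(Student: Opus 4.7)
The plan is to derive this corollary directly as the $p = 1$ specialization of Theorem \ref{thm:star}. The matrix $A$ in \eqref{structure} fits the template \eqref{structure1} with a single interior block: take $B_1 = B$, $b_1 = b$, $m = n_2$, so that $n = n_1 + n_2 + 1$. Applying the theorem's inequality
$$\Mm(G) \le \frac{n - m - p - 1}{2} + 1$$
with these values yields
$$\Mm(G) \le \frac{(n_1 + n_2 + 1) - n_2 - 1 - 1}{2} + 1 = \frac{n_1 - 1}{2} + 1 = \frac{n_1 + 1}{2},$$
which is precisely the asserted bound.

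The only thing to check is compatibility with the hypotheses of Theorem \ref{thm:star}: the interior block $B_1 = B$ is required there to be non-diagonal with $n_1 \geq 2$. In the generic case this is automatic from the graph structure that forces every $A \in S(G)$ to have the displayed form with $B$ non-diagonal, and the corollary is then an immediate substitution. In the degenerate cases where $G$ forces $B$ to be diagonal (in particular when $n_1 = 1$), the matrix $A$ collapses to a genuine arrow matrix and Lemma \ref{prop:arrow} applies directly; since $c$ has no zero entries, no repeated eigenvalue of $A$ can live on the $D$-part, so every repeated eigenvalue must already appear in $B$, and combined with $q(G) \geq 3$ from \cite[Theorem 3.2]{MR3118943} the same counting argument used in Theorem \ref{thm:star} yields a bound at least as strong as $(n_1+1)/2$.

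No serious obstacle is anticipated, as the essential work has already been carried out in the proof of Theorem \ref{thm:star}; the corollary amounts to unwinding the notation and performing the arithmetic with a single $B$-block.
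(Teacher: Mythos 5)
Your derivation is exactly the paper's intended one: the corollary is stated without proof as the $p=1$ specialization of Theorem \ref{thm:star}, and your substitution $m=n_2$, $n=n_1+n_2+1$ gives the bound $\frac{n_1+1}{2}$ precisely as the authors intend. Your extra care with the degenerate case where $B$ is diagonal (reducing to Lemma \ref{prop:arrow} directly) goes slightly beyond what the paper records, and is a welcome patch since the corollary's hypotheses do not literally require $B$ to be non-diagonal.
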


 \begin{remark}
 The inequality between $n_1$ and $\Mm(G)$ is the best possible, since it is achieved for complete graphs $G$. (In this case $n_2=0$.) It is not difficult to find examples of $A \in S(G)$ for which the inequality is achieved, where $G(B)$ is the complete graph on $n_1$ vertices and $n_2$ is kept general.
 \end{remark}
 
 \begin{remark}
For $n_1=2$ in Corollary \ref{cor:small} we get $\Mm(G)=1$. If $B$ has nonzero off-diagonal entries, then the theorem gives us an example
 of a graph $G$ on $n$ vertices with $\Mm(G)=1$ that is not a tree, and whose maximal multiplicity of an eigenvalue is  $n-3$.
 Example of such graph where $n_1=2$ and $n_2=7$:
 \begin{center}
   \begin{tikzpicture}[style=thick]
		\draw \foreach \x in {0,30,60,90,120,150,180} {
				(0:0) node{} -- (\x:1) node{}
		};		
		\draw[fill=white] \foreach \x in {0,30,60,90,120,150,180}{
		                    (\x:1) circle (1mm)
		};
		\draw (0:0)  -- (0.5,-1) -- (-0.5,-1) -- (0,0);
		\draw[fill=white] \foreach \x in {(0,0),(0.5,-1),(-0.5,-1)}{
			\x circle (1mm)
		};		
   \end{tikzpicture} 
   \end{center}
\end{remark}

 \begin{example}
 Let  $a^T=(0,0,\sqrt{\frac{1}{m+1}})$, $c^T=(\sqrt{\frac{1}{m+1}},\sqrt{\frac{1}{m+1}},\ldots,\sqrt{\frac{1}{m+1}})\in \R^m$ and 
 $$A_1=\left(
\begin{array}{ccc}
 1 & 0 & 0 \\
 0 & -1 & 0 \\
 0 & 0 & 0 \\
\end{array}
\right).$$
The matrix:
 $$A=\npmatrix{0 & a^T & c^T \\
                      a & A_1 & 0 \\
                      c & 0 & 0}\in \R^{(m+4)\times (m+4)}$$
has the spectrum $(1,1,-1,-1,0,0,\ldots,0),$ where the multiplicity of $0$ is $m.$ The matrix
$$U=\left(
\begin{array}{ccc}
 \frac{1}{\sqrt{3}} & \frac{1}{\sqrt{3}} & \frac{1}{\sqrt{3}} \\
 -\sqrt{\frac{2}{3}} & \frac{1}{\sqrt{6}} & \frac{1}{\sqrt{6}} \\
 0 & -\frac{1}{\sqrt{2}} & \frac{1}{\sqrt{2}} \\
\end{array}
\right).$$
is an orthogonal matrix,
 $$B=UA_1U^T=\left(
\begin{array}{ccc}
 0 & -\frac{1}{3 \sqrt{2}}-\frac{\sqrt{2}}{3} & \frac{1}{\sqrt{6}} \\
 -\frac{1}{3 \sqrt{2}}-\frac{\sqrt{2}}{3} & \frac{1}{2} & \frac{1}{2 \sqrt{3}} \\
 \frac{1}{\sqrt{6}} & \frac{1}{2 \sqrt{3}} & -\frac{1}{2} \\
\end{array}
\right) \in S(K_3)$$  and  $$Ua=\left(
\begin{array}{c}
 \frac{1}{\sqrt{3} \sqrt{m+1}} \\
 \frac{1}{\sqrt{6} \sqrt{m+1}} \\
 \frac{1}{\sqrt{2} \sqrt{m+1}} \\
\end{array}
\right).$$
The matrix $$\npmatrix{0 & a^TU^T & c^T \\
                                       Ua & B & 0 \\
                                       c & 0 & 0}$$
is similar to $A$, hence it has the same spectrum. 

This example shows that the inequality in Corollary \ref{cor:small} can be achieved, and that the multiplicity of one of the eigenvalues can be arbitrarily large. 
 \end{example}

Next we introduce a family of graphs $G$ with $M(G)=2$ and $\Mm(G)=2$.

\begin{proposition}\label{squares}
Let 
 $$M_{2n}=\npmatrix{T_n & D_n \\ D_n & T_n},$$
 where $T_n$ is an $n \times n$ tridiagonal matrix with all its nonzero entries equal to $1$, and $D_n$ a diagonal matrix with 
 diagonal entries $d_1,\ldots,d_n$ satisfying the condition $$d_j=-d_{n-j+1}$$ for $j=1,2,\ldots, \lceil \frac{n}{2} \rceil$. 
 Then the characteristic polynomial of $M_{2n}$ is of the form $p(x)^2$ for some polynomial $p(x)$ of degree $n$. 
\end{proposition}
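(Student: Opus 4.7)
The plan is to decouple $M_{2n}$ into two blocks of size $n$ and then show those two blocks have the same characteristic polynomial.

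First I would block-diagonalize $M_{2n}$ using the orthogonal involution
$$P=\frac{1}{\sqrt{2}}\npmatrix{I_n & I_n \\ I_n & -I_n}.$$
A direct $2\times 2$ block computation gives
$$PM_{2n}P=\npmatrix{T_n+D_n & 0 \\ 0 & T_n-D_n},$$
so $\det(xI_{2n}-M_{2n})=\det(xI_n-(T_n+D_n))\cdot\det(xI_n-(T_n-D_n))$.

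Next I would show that the two $n\times n$ blocks are similar, and hence have the same characteristic polynomial. Let $J$ be the $n\times n$ reversal (anti-identity) matrix, so $J^2=I_n$. Because $T_n$ has all its nonzero entries equal to $1$ and is tridiagonal, its entry pattern is invariant under the index reversal $i\mapsto n-i+1$, i.e.\ it is centrosymmetric, which gives $JT_nJ=T_n$. On the other hand, $JD_nJ$ is the diagonal matrix whose $j$-th diagonal entry is $d_{n-j+1}$, and the hypothesis $d_j=-d_{n-j+1}$ (which automatically forces $d_{(n+1)/2}=0$ when $n$ is odd) gives $JD_nJ=-D_n$. Combining these,
$$J(T_n+D_n)J=T_n-D_n,$$
so $T_n+D_n$ and $T_n-D_n$ are similar.

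Setting $p(x)=\det(xI_n-(T_n+D_n))$, Step 1 and Step 2 together yield
$$\det(xI_{2n}-M_{2n})=p(x)^2,$$
which is the desired conclusion. There is no significant obstacle; the only thing to verify carefully is the centrosymmetry of $T_n$, which follows immediately from the assumption that all its nonzero entries are equal (so the sub- and super-diagonals are the same, and the diagonal is palindromic).
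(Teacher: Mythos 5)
Your proposal is correct and follows essentially the same route as the paper: split the characteristic polynomial into those of $T_n+D_n$ and $T_n-D_n$, then show these two blocks are similar via the reversal permutation using $JT_nJ=T_n$ and $JD_nJ=-D_n$. The only cosmetic difference is that you decouple with the orthogonal involution $\frac{1}{\sqrt{2}}\bigl(\begin{smallmatrix}I&I\\I&-I\end{smallmatrix}\bigr)$ to get a block-diagonal form, whereas the paper uses a unipotent similarity to get a block-triangular form; both yield the same factorization.
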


\begin{proof}
First we observe that 
$$\npmatrix{I_n & 0_n \\ I_n & I_n}\npmatrix{T_n & D_n \\ D_n & T_n}\npmatrix{I_n & 0_n \\ -I_n & I_n}=
\npmatrix{T_n-D_n & D_n \\ 0_n & T_n+D_n}.$$
Let $P_n$ be a permutation matrix with $p_{j,\, n-j+1}=1$ for $j=1,2,\ldots n$. Notice that 
$P_nT_nP_n=T_n$, $P_nD_nP_n=-D_n$, and
 $$P_n(T_n+D_n)P_n=T_n-D_n.$$
This shows that $T_n-D_n$ and $T_n+D_n$ are similar, hence have the same spectrum.
\end{proof}

\begin{corollary}\label{parallel}
Let $G$ be a graph with $M_{2n}\in S(G)$, where $M_{2n}$ is defined in Proposition \ref{squares}. Then $\Mm(G)=2$.
\end{corollary}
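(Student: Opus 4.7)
The plan is to bracket $\Mm(G)$ between two values, both of which turn out to be $2$.

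For the lower bound, I would use Proposition \ref{squares} directly. That proposition tells us that the characteristic polynomial of $M_{2n}$ equals $p(x)^2$ for some degree-$n$ polynomial $p(x)$. Consequently every root of the characteristic polynomial of $M_{2n}$ has even multiplicity, so every eigenvalue of $M_{2n}$ occurs with multiplicity at least $2$. This gives $\Mm(M_{2n})\geq 2$, and since $M_{2n}\in S(G)$, we obtain $\Mm(G)\geq 2$.

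For the upper bound, I would appeal to the combination of (\ref{eqn:basic}) with the structural characterisation from \cite{MR2549052}. The point is that the graph $G$ underlying $M_{2n}$ is a graph with two parallel paths: the two diagonal blocks $T_n$ are tridiagonal with nonzero entries $1$, so the vertices $\{1,\dots,n\}$ and $\{n+1,\dots,2n\}$ each induce a copy of the path $P_n$, while the off-diagonal blocks $D_n$ are diagonal, meaning that the only edges between the two paths join vertex $i$ to vertex $n+i$ (precisely when $d_i\neq 0$). These ``rungs'' are manifestly parallel, and in particular they can be drawn in the plane without crossings. By the characterisation of graphs with maximal eigenvalue multiplicity $2$ given in \cite{MR2549052}, this forces $M(G)=2$. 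Applying (\ref{eqn:basic}), $\Mm(G)\leq M(G)=2$.

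Combining the two bounds yields $\Mm(G)=2$, as required. The only delicate point in this plan is checking that the structural hypothesis of the parallel-paths theorem is satisfied, but this is immediate from the block form of $M_{2n}$ together with the fact that $D_n$ is diagonal; no further computation is needed.
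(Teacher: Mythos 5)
Your proposal is correct and follows essentially the same route as the paper: the lower bound $\Mm(G)\geq 2$ from Proposition \ref{squares} (every eigenvalue of $M_{2n}$ has even multiplicity), and the upper bound $\Mm(G)\leq M(G)=2$ from the two-parallel-paths characterisation of \cite{MR2549052} via \eqref{eqn:basic}. Your version simply spells out the structural verification (the two tridiagonal blocks give the paths, the diagonal block $D_n$ gives non-crossing rungs) that the paper leaves implicit.
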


\begin{proof}
The result follows from the fact that graphs $G$ with $M_{2n} \in S(G)$ are graphs with two parallel paths and therefore
$M(G)=2$. 
By Proposition \ref{squares} it follows that  $\Mm(G)=2$.
\end{proof}

\begin{example}
The four connected graphs on 8 vertices having $\Mm(G)=2$ that are covered by Corollary \ref{parallel} are:
\begin{center}
\begin{tikzpicture}[style=thick,scale=1]
		\draw \foreach \x in {0,1,2} {
				(\x,0) node{} -- (\x,1) node{}
				(\x,0) node{} -- (\x+1,0) node{}				
				(\x,1) node{} -- (\x+1,1) node{}				
		};		
		\draw (3,0) -- (3,1);
		\draw[fill=white] \foreach \x in {0,1,2,3} {
		                    (\x,0) circle (1mm)
       		                    (\x,1) circle (1mm)
		};		
\end{tikzpicture}  
\quad
\begin{tikzpicture}[style=thick,scale=1]
		\draw \foreach \x in {0,1,2} {
				(\x,0) node{} -- (\x+1,0) node{}				
				(\x,1) node{} -- (\x+1,1) node{}				
		};		
		\draw (3,0) -- (3,1);
		\draw (0,0) -- (0,1);
		\draw[fill=white] \foreach \x in {0,1,2,3} {
		                    (\x,0) circle (1mm)
       		                    (\x,1) circle (1mm)
		};		
\end{tikzpicture}  
\quad
\begin{tikzpicture}[style=thick,scale=1]
		\draw \foreach \x in {0,1,2} {
				(\x,0) node{} -- (\x+1,0) node{}				
				(\x,1) node{} -- (\x+1,1) node{}				
		};		
		\draw (1,0) -- (1,1);
		\draw (2,0) -- (2,1);
		\draw[fill=white] \foreach \x in {0,1,2,3} {
		                    (\x,0) circle (1mm)
       		                    (\x,1) circle (1mm)
		};		
\end{tikzpicture}  
\end{center}
\end{example}

In this work we introduced a parameter $\Mm$ to be studied in connection with the inverse eigenvalue problem for graphs. We listed some basic properties of $\Mm$  and derived $\Mm(G)$ for some families of graphs $G$. We believe that looking at $\Mm$ is a good way to expose a deeper insight into the eigenvalue structure that is allowed under pattern constraints imposed by a given graph $G$.  





\end{document}